\font\tenrm = cmr20 at 12pt
\DeclareMathOperator{\semi}{\centering{\substack{<\\ (=)}}}
\DeclareMathOperator{\Pic}{Pic}
\newtheorem*{corollary*}{Corollary}
\newtheorem*{theorem*}{Theorem}
\newtheorem{theorem}{Theorem}[section]
\newtheorem{lemma}[theorem]{Lemma}
\newtheorem{prop}[theorem]{Proposition}
\newtheorem{corollary}[theorem]{Corollary}
\theoremstyle{definition}
\newtheorem{definition}[theorem]{Definition}
\newtheorem{example}{Example}[theorem]
\theoremstyle{remark}
\newtheorem{rem}{Remark}[theorem]
\newcommand{\bb}{\mathbb}
\numberwithin{equation}{section}
\newcommand{\Stab}{\rm Stab}
\def\Z{\mathbb{Z}}
\def\R{\mathbb{R}}
\def\Q{\mathbb{Q}}
\def\C{\mathbb{C}}
\def\P{\mathbb{P}}
\title[Change of Polarization]{Change of polarization for moduli of sheaves on surfaces as Bridgeland wall-crossing}
\author{Aaron Bertram}
\address{Department of Mathematics\\ University of Utah\\
155 S 1300 E \\
Salt Lake City, UT 84112}
\email{bertram@math.utah.edu}
\author{Cristian Martinez }
\address{Department of Mathematics\\ University of Utah\\
155 S 1300 E \\
Salt Lake City, UT 84112}
\email{martinez@math.utah.edu}
\subjclass[2000]{}
\keywords{}
\begin{document}
\maketitle

\begin{abstract} 
We prove that the ``Thaddeus flips" of $L$-twisted sheaves appearing in \cite{MW} can be obtained via Bridgeland wall-crossing. Similarly, we realize the change of polarization for moduli spaces of 1-dimensional Gieseker semistable sheaves on a surface by varying a family of stability conditions.
\end{abstract}

\tableofcontents

\section{Introduction}
The notion of stability for torsion-free sheaves on a smooth projective complex surface $X$ depends on the choice of a divisor class $H\in \mbox{Amp}(X)$ in the ample cone of the surface. The coarse moduli spaces $M_H(v)$ of $H$-Gieseker semistable sheaves on $X$ with Chern character $v$ are projective and can be constructed via Geometric Invariant Theory (GIT) (\cite{Gies77}). There is a wall and chamber decomposition of the ample cone of the surface $\mbox{Amp}(X)$ such that $M_H(v)$ and $M_{H'}(v)$ are isomorphic when $H$ and $H'$ belong to the same chamber. In the 90s there was a great deal of interest in studying how these moduli spaces relate to each other for polarizations in different chambers. Results obtained independently by Ellingsrud and G\"{o}ttsche \cite{EG} and Friedman and Qin \cite{FQ} for rank-two sheaves, and by Matsuki and Wentworth \cite{MW} in arbitrary rank show that when crossing a wall in $\mbox{Amp}(X)$, the moduli space $M_H(v)$ goes through a sequence of ``Thaddeus flips''  of moduli spaces of {\it twisted} sheaves. 

 If $L$ is a $\Q$-line bundle on $X$,  then a torsion-free sheaf $E$ is $L$-twisted $H$-Gieseker semistable \cite[Definition 3.2]{MW} if for all subsheaves $A\hookrightarrow E$ one has:
\begin{equation}\label{eqntwisted}
\left(\frac{\chi(A\otimes L)}{r(A)}-\frac{\chi(E\otimes L)}{r(E)}\right)+t(\mu_{H}(A)-\mu_H(E))\leq 0 \  \ \text{for}\ \ t\gg 0,
\end{equation}
where the Euler characteristic $\chi(\underline{\hspace{0.4cm}}\otimes L)$ is defined formally via the Riemann--Roch theorem\footnote{The definition of twisted semistability in \cite{MW} is given in terms of reduced Hilbert polynomials. One needs the Riemann--Roch theorem to see the equivalence with \eqref{eqntwisted}.}. Coarse moduli spaces of $L$-twisted $H$-Gieseker semistable sheaves were constructed in \cite{MW} and proven to be projective.

The introduction of Bridgeland stability conditions \cite{Bstab} provides us with new tools to study the birational geometry of $M_H(v)$. For instance, it has been shown that running a
directed Minimal Model Program (MMP) for $M_H(v)$ when $X$ is $K3$  \cite{BMK3}, abelian \cite{YosAb}, Enriques \cite{Nuer}, or the projective plane \cite{ABCH,BMW,CHW}, corresponds to varying a stability condition on $X$. Also, when $v=[\C_x]$ the moduli space $M_H(v)$ is canonically isomorphic to $X$ and Toda \cite{TodaPS} has proven that any MMP for $X$ can be obtained by varying stability conditions. 

By the boundedness results for walls on certain rays in $\mbox{Stab}(X)$ obtained by Lo and Qin \cite{LQ} and generalized by Maciocia \cite{MACIO}, we know that moduli spaces of $L$-twisted $H$-Gieseker semistable sheaves are moduli spaces of Bridgeland semistable objects. Then one can ask if the variation of GIT obtained by Matsuki and Wentworth relating moduli spaces of Gieseker semistable sheaves for different polarizations can be interpreted as Bridgeland wall-crossings, and moreover if we can find a specific family of stability conditions realizing this variation.

For a fixed Chern character cohomology ``vector'' $v=(r(v),c_1(v),ch_2(v))$ on $X$, the vectors
\begin{equation}\label{definition_d_t}
 \alpha_t=\left(1,-\frac{K_X}{2}+L+tH,d_t\right)\ \  \text{with}\ \ d_t=-\frac{\chi(v)}{r(v)}-\frac{c_1(v)}{r(v)}\left(L+tH\right)+\chi(\mathcal{O})
 \end{equation}
are perpendicular to $v$ in cohomology, i.e.:
$$
\langle v, \alpha_t \rangle = \int_X ch_2(v) + c_1(v) \cdot  \left(-\frac{K_X}{2}+L+tH\right) + r(v) d_t = 0,
$$ 
where $\langle\ \cdot\ ,\ \cdot\ \rangle$ denotes the Poincar\'e pairing on cohomology classes.
 
If $E$ is a torsion-free sheaf with $ch(E)=v$ and $A$ is torsion-free then
\begin{align*}
\langle ch(A), \alpha_t \rangle&=\chi(A)+c_1(A)(L+tH)+r(A)d_t-r(A)\chi(\mathcal{O})\\
&= r(A)\left(\frac{\chi(A)}{r(A)}+\frac{c_1(A)}{r(A)}(L+tH)+d_t-\chi(\mathcal{O})\right)\\
&=r(A)\left(\frac{\chi(A\otimes L)}{r(A)}-\frac{\chi(E\otimes L)}{r(E)}+t(\mu_{H}(A)-\mu_H(E))\right).
\end{align*}

Thus $E$ is $L$-twisted $H$-Gieseker semistable if and only if for every subsheaf $A\hookrightarrow E$ one has
\begin{equation}\label{gieseker_as_a_pairing}
\langle ch(A), \alpha_t \rangle\leq 0 \ \ \text{for}\ \ t\gg 0.
\end{equation}

This is the key observation that will allow us to study the change of polarization. Each of the orthogonal classes $\alpha_t\in v^{\perp}$ produces a line bundle $\mathcal{L}_t\in \mbox{Pic}(M_H(v))_{\R}$, via the determinant line bundle construction, that is ample for $t$ large enough. On the other hand, Bayer and Macri \cite{BM} associate to every stability condition $\sigma\in \mbox{Stab}(X)$, a nef divisor class $\mathcal{L}_{\sigma}$ on the stack $\mathcal{M}_{\sigma}(v)$ of flat families of $\sigma$-semistable objects of Chern character $v$. Our idea is to study a particular ray of stability conditions $\{\sigma_t\}\subset\mbox{Stab}(X)$, associated to the classes $\alpha_t$, for which $\mathcal{L}_t=\mathcal{L}_{\sigma_t}$ whenever $\sigma_t$-semistability coincides with $H$-Gieseker semistablility. However, since the condition \eqref{gieseker_as_a_pairing} is only asymptotic we will need to establish it for specific values of $t$; that is, bound the values of $t$ that correspond to walls on the ray $\{\sigma_t\}$.  

To see the variation on the moduli spaces of Gieseker semistable sheaves as Bridgeland wall-crossings, we need to allow $H$ to move in the ample cone. Ideally, one would consider the family of stability conditions associated to the vectors 
$$
 \alpha_{t,H}=\left(1,-\frac{K_X}{2}+tH,d_t\right),\ \text{for arbitrary}\ H\in\mbox{Amp}(X),
$$
but in the absence of a global boundedness result for Bridgeland walls, one needs to restrict to finitely generated convex cones inside $\mbox{Amp}(X)$. In this case, we can use the boundedness result of Maciocia \cite{MACIO} and a detailed study of the walls to find convex regions in every Gieseker chamber, for which the line bundles $\mathcal{L}_{t,H}$ (associated to $\alpha_{t,H}$) are ample.

The main result of this note is:
\begin{theorem*}[Theorem \ref{main2}] Let $H'$ and $H''$ be ample classes in adjacent chambers in the wall and chamber decomposition of $\mbox{Amp}(X)$ for the class $v$. Then there is a one dimensional family of stability conditions $\{\gamma_s\}_{s\in(-1,1)}$ and rational numbers $-1=s_0<s_1<\cdots <s_n=1$ such that 
each moduli space $M_{\gamma_s}(v)$ of Bridgeland semistable objects is a moduli space of twisted sheaves for every $s$ and is constant on $(s_i,s_{i+1})$, it equals $M_{H'}(v)$ for $s\in(s_0,s_1)$ and equals $M_{H''}(v)$ for $s\in(s_{n-1},s_n)$.
\end{theorem*}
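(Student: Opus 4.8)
The plan is to connect $M_{H'}(v)$ and $M_{H''}(v)$ by a single arc of stability conditions whose orthogonal classes are the $\alpha_{t,H}$ of the introduction, with $t$ held fixed and large while $H$ sweeps across the $\mbox{Amp}(X)$-wall $W$ separating the two chambers. Fix an interior point $H_0\in W$ on the segment $[H',H'']$ and let $H(s)$, $s\in(-1,1)$, trace a path inside a finitely generated convex subcone $C\subset\mbox{Amp}(X)$ with $H(s)$ in the $H'$-chamber for $s$ near $-1$, in the $H''$-chamber for $s$ near $+1$, and crossing $W$ at $H_0$ for some interior value of $s$. Setting $\gamma_s$ to be the stability condition attached to $\alpha_{t,H(s)}$, the decomposition $\alpha_{t,H(s)}=(1,-K_X/2+L(s)+tH_0,d)$ with twist $L(s)=t\big(H(s)-H_0\big)$ exhibits each $\gamma_s$ as an $L(s)$-twisted $H_0$-Gieseker class in the sense of \eqref{gieseker_as_a_pairing}. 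Thus, once the semistable objects are known to be torsion-free sheaves, each $M_{\gamma_s}(v)$ is a moduli space of twisted sheaves in the sense of \cite{MW}, and moving $s$ past $H_0$ is precisely varying the twist $L(s)$ through $0$, the mechanism producing the Thaddeus flips.

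First I would apply Maciocia's boundedness \cite{MACIO} on the cone $C$ to produce a threshold $t_0$ such that for every $t>t_0$ and every $H\in C$ the $\alpha_{t,H}$-semistable objects of class $v$ are torsion-free sheaves and only finitely many classes $A$ give numerical walls $\langle ch(A),\alpha_{t,H}\rangle=0$ meeting $C$; fixing $t>t_0$ then shows, via \eqref{gieseker_as_a_pairing}, that every $M_{\gamma_s}(v)$ is a moduli of twisted sheaves. Writing $\xi_A=r(v)c_1(A)-r(A)c_1(v)$, the computation of the introduction gives that $\langle ch(A),\alpha_{t,H(s)}\rangle$ is, up to the positive factor $r(A)$, an $s$-independent Euler-characteristic constant plus $\tfrac{t}{r(v)}\,\xi_A\cdot H(s)$; hence its vanishing locus is where $\xi_A\cdot H(s)$ attains a value of size $O(1/t)$. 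Since $W=\{\xi_A\cdot H=0\}$ for the subsheaves $A$ destabilizing along the wall, every wall met by the arc lies within an $O(1/t)$ neighborhood of $W$, and for the finitely many relevant $A$ these crossings occur at distinct rational parameters, which I label $s_1<\cdots<s_{n-1}$, together with the endpoints $s_0=-1$ and $s_n=1$.

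For the endpoints, taking $H'$ and $H''$ at bounded distance from $W$ ensures that for $t$ large the intervals $(s_0,s_1)$ and $(s_{n-1},s_n)$ lie beyond all the $O(1/t)$-shifted walls; there $\gamma_s$-semistability reduces to ordinary $H(s)$-Gieseker semistability, and as $H(s)$ lies in the $H'$- (resp. $H''$-) chamber we obtain $M_{\gamma_s}(v)=M_{H'}(v)$ on $(s_0,s_1)$ and $M_{\gamma_s}(v)=M_{H''}(v)$ on $(s_{n-1},s_n)$. Constancy on each open interval $(s_i,s_{i+1})$ is the standard fact that the set of $\gamma_s$-semistable objects is locally constant away from walls, and identifying the determinant line bundle $\mathcal{L}_{t,H(s)}$ with the Bayer--Macri nef class $\mathcal{L}_{\gamma_s}$ \cite{BM} both upgrades these set-theoretic equalities to isomorphisms of moduli spaces and shows the classes are ample inside each chamber.

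The main obstacle I anticipate is the analysis in the angular neighborhood of $H_0$, where the $\mbox{Amp}(X)$-wall $W$ forces strictly semistable sheaves: I must verify that the finitely many numerical Bridgeland walls accumulating near $H_0$ account for precisely the destabilizing subsheaves in the Matsuki--Wentworth description of the flip, and that no $\gamma_s$-semistable object of class $v$ degenerates to a genuine two-term complex as the arc passes $H_0$. Securing a uniform sheafiness threshold $t_0$ over the cone $C$ from \cite{MACIO}, and matching the walls near $H_0$ with the strata of the Gieseker wall-crossing, is the heart of the argument.
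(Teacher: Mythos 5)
Your strategy is the same as the paper's in spirit: interpolate between the two Gieseker moduli through stability conditions that are twisted Gieseker conditions at the wall polarization $H_0$, and identify the finitely many crossings with the Thaddeus flips of \cite{MW}. (The paper implements this with the two-parameter family $\alpha_{s,t}=(1,-\tfrac{K}{2}+sH_0+tH^{\pm},d_{s,t})$ and takes the vertical ray at large $s_0$, where $\sigma_{s_0,t}$-semistability is exactly $tH^{\pm}$-twisted $H_0$-Gieseker semistability.) However, the two points you defer are not technicalities but the actual content of the theorem, and they do not follow from the ingredients you cite. The uniform threshold $t_0$ over the cone $C$ is not a consequence of Maciocia's result: Theorem \ref{maciocia} bounds walls in a half-plane where the ample class defining the heart is \emph{fixed} and only the degree-one part of $\alpha$ moves along a line $xH+u_0G$, whereas in your family both the central charge and the heart $\mathcal{A}^{H(s)}$ vary with $s$. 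This is precisely the situation the paper flags as the reason the standard boundedness arguments (e.g.\ those of \cite{LQ}) break down, and the whole purpose of Lemma \ref{important}, Corollary \ref{important2} and the proof of Theorem \ref{main} is to extend the bound to it by showing that in the quadrant $\{\sigma_{s,t}\}$ every actual wall is a line of nonpositive slope whose horizontal members come from genuine destabilizing \emph{subsheaves}. Without that analysis you cannot assert that $\gamma_s$-semistable objects of class $v$ are torsion-free sheaves, nor that the only walls your arc meets are the $O(1/t)$-shifted hyperplanes $\{\xi_A\cdot H=c_A/t\}$: a priori an object could be destabilized by a subobject $A$ whose quotient $B$ has $\mathcal{H}^{-1}(B)\neq 0$, producing walls with no Gieseker interpretation and no reason to lie near $W$.

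The second gap is the identification of $M_{\gamma_s}(v)$ with a Matsuki--Wentworth moduli space. You read the $L(s)$-twisted $H_0$-Gieseker condition off the decomposition $\alpha_{t,H(s)}=(1,-K_X/2+L(s)+tH_0,d)$, but condition \eqref{gieseker_as_a_pairing} is asymptotic in the coefficient of $H_0$, while your coefficient is the fixed finite number $t$. To conclude that Bridgeland semistability at this finite parameter coincides with twisted Gieseker semistability you must know you are beyond every wall on the corresponding ray --- that is, you need Theorem \ref{main} in its twisted form, which is exactly what the paper secures by pushing $s_0$ past the finitely many non-horizontal walls so that only the horizontal (rational, subsheaf-induced) walls remain on the ray $\Lambda_{s_0}$. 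You correctly name this verification as ``the heart of the argument'' in your final paragraph, but as written it is the part of the proof that is missing rather than a loose end to be checked.
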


Thus every wall in $\mbox{Amp}(X)$ corresponds to a finite sequence of Bridgeland walls, and it will follow from the proof that we get a Bridgeland wall for each Thaddeus flip obtained in \cite{MW}. Applying a similar analysis, we will also find a one-parameter family of stability conditions explaining the change of polarization for 1-dimensional Gieseker semistable sheaves; in this case, there is a one-to-one correspondence between Bridgeland walls and Gieseker walls.

The organization of the paper is as follows. In Section \ref{definitions} we set up the notation for the type of stability conditions that we will use, the main reference is \cite{bertram}. For the basics on stability conditions the reader may consult \cite{Bstab} or the expository notes \cite{HSTAB}. In Section \ref{boundedness} we prove that walls intersecting certain rays in $\mbox{Stab}(X)$ are bounded above. In Section \ref{changeofpolarization} we prove Theorem \ref{main2}. In Section \ref{change_for_1-dim} we treat the case of change of polarization for 1-dimensional sheaves. Finally, in Section \ref{birational} we use our methods to give a proof of the result of Toda \cite{TodaPS} that a contraction of a $-1$-curve can be interpreted as a Bridgeland wall-crossing. 

While preparing this manuscript we were informed by Kota Yoshioka about his work on perverse sheaves \cite{Yos14}. He also obtains Theorem \ref{main2}.   

\section{Preliminaries}\label{definitions}

Let $X$ be a smooth projective complex surface and $H\in \mbox{Amp}(X)$ a polarization. A torsion-free sheaf $\mathcal{F}$ is slope semistable with respect to $H$ (or $H$-semistable) if for all subsheaves $A\hookrightarrow \mathcal{F}$ one has $\mu_H(A)\leq \mu_H(\mathcal{F})$ where $\mu_H(\ \cdot\ )=c_1(\ \cdot\ )\cdot H/r(\ \cdot\ )$. Every torsion-free sheaf $E\in\mbox{Coh}(X)$ has a unique filtration
$$
0=E_0\subset E_1\subset \cdots E_{n-1}\subset E_n=E 
$$ 
such that every factor $F_i=E_i/E_{i-1}$ is $H$-semistable and $\mu_H(F_1)>\mu_H(F_2)>\cdots >\mu_H(F_n)$. We refer to $F_1$ and $F_n$ as the first and the last $H$-semistable factors respectively.

For a fixed Chern character $v=(r(v),c_1(v),ch_2(v))$ on $X$, the set of hyperplanes 
$$
\mathcal{H}_A=\{H\in \mbox{Amp}(X)_{\Q}\colon \mu_H(A)=\mu_H(v)\},
$$
for which there is an injective map $A\hookrightarrow E$ to some slope semistable sheaf $E$ of type $v$ and  
$$
\frac{c_1(A)}{r(A)}\neq \frac{c_1(E)}{r(E)},
$$ 
is locally finite in $N^1(X)_{\Q}$. Moreover, if $\Delta$ is a finitely generated convex cone in $\mbox{Amp}(X)_{\Q}$ then only finitely many of these hyperplanes intersect $\Delta\setminus\{0\}$ (\cite{MW}). We refer to these hyperplanes as walls for slope stability; every connected component of $\mbox{Amp}(X)_{\Q}\setminus \bigcup_{A}\mathcal{H}_A$ is called a chamber. A wall $\mathcal{H}_A$ is a wall for Gieseker stability for the class $v=ch(E)$ if also 
$$
\frac{\chi(A)}{r(A)}\geq \frac{\chi(E)}{r(E)}.
$$     

The wall and chamber decomposition for Gieseker stability is not well behaved since, in general, a torsion-free sheaf can pass from being stable to being unstable without ever being strictly semistable. For an example of this phenomenon see Example \ref{p1p1example}.

Denote by $\widetilde{\mbox{NS}}(X)$ the extended Neron-Severi group $H^0(X,\Z)\oplus \mbox{NS}(X)\oplus H^4(X,\Z)$.  The vector space $\widetilde{\mbox{NS}}(X)_{\R}=\widetilde{\mbox{NS}}(X)\otimes \R$ carries a nondegenerate Poincar\'e pairing
$$
\langle (u_0,u_1,u_2),(v_0,v_1,v_2)\rangle:=u_0v_2+u_1v_1+u_2v_0.
$$
By the Riemann--Roch theorem we obtain
$$
\langle u,v\rangle=\chi(u\cdot \mbox{td}(X)^{-1}\cdot v).
$$ 

As observed by the first author in \cite{bertram}, a vector $\alpha=(\alpha_0,\alpha_1,\alpha_2)\in \widetilde{\mbox{NS}}(X)_{\R}$ with $\alpha_0>0$ and satisfying the Bogomolov inequality $\alpha_1^2>2\alpha_0\alpha_2$, induces a stability condition $\sigma_{\alpha}^H=(Z_{\alpha}^H,\mathcal{A}_{\alpha}^H)\in \mbox{Stab}(X)$ for every ample class $H\in N^1(X)$. The heart $\mathcal{A}_{\alpha}^H$ is obtained in the usual way by tilting with respect to the torsion pair
\begin{align*}
\mathcal{Q}_{\alpha}^{H}&=\{Q\in \mbox{Coh}(X)\colon Q\ \text{is torsion, or}\ \langle ch(B),\alpha H\rangle>0\ \ \text{for all quotients}\ \ Q/\mbox{tor}(Q) \twoheadrightarrow B\},\\
\mathcal{F}_{\alpha}^H&=\{F\in \mbox{Coh}(X)\colon F\ \text{is torsion-free, and}\ \langle ch(A),\alpha H\rangle \leq 0\ \ \text{for all subsheaves}\ \ A\hookrightarrow F \}.
\end{align*}

The central charge is given by
\begin{equation}\label{central_charge}
Z_{\alpha}^H(E)= - \langle ch(E),\alpha \rangle+\sqrt{-1}\langle ch(E),\alpha\cdot H\rangle,
\end{equation}
where $\alpha \cdot H$ is the product in cohomology. 
\section{A boundedness result}\label{boundedness}

Fix a Chern character vector $v=(c_0,c_1,c_2)\in \widetilde{\mbox{NS}}(X)_{\Q}$ with $c_0> 0$, and an ample class $H\in N^1(X)_{\Q}$. Consider the vector 
$$
\alpha_t=(1,D_t,d_t),
$$
where $D_t=-\frac{K_X}{2}+tH$ and $d_t\in H^4(X,\Q)$ is chosen as in \eqref{definition_d_t} to guarantee that $\langle v,\alpha_t\rangle=0$. 

For an ample class $H'\in N^1(X)_{\Q}$ (not necessarily equal to $H$) and $t$ such that $D_t^2>2d_t$, we obtain a stability condition $\sigma_t^{H'}:=\sigma_{\alpha_t}^{H'}$. An object $E\in \mathcal{A}_t^{H'}$ with $ch(E)=v$ is $\sigma_t^{H'}$-semistable if and only if for all sub-objects $A\hookrightarrow E$ in $\mathcal{A}_t^{H'}$ one has
\begin{equation}\label{bridgelandcondition}
\frac{\langle ch(A),\alpha_{t}\rangle}{\langle ch(A), \alpha_t H'\rangle}\leq \frac{\langle v, \alpha_t\rangle}{\langle v, \alpha_t H'\rangle}\ \ \Leftrightarrow\ \  \langle ch(A),\alpha_{t}\rangle\leq 0.
\end{equation}

A \textbf{sheaf} $\mathcal{F}$ is in $\mathcal{A}^{H'}_t$ if and only if its last $H'$-semistable factor $F_n$ satisfies
$$
\mu_{H'}(F_n)>\frac{K_X H'}{2}-tHH'.
$$ 
Hence, it follows that the categories $\mathcal{A}^{H'}_t$ ``approach'' $\mbox{Coh}(X)$ as $t$ approaches infinity. 

Since condition \eqref{bridgelandcondition} is equivalent to the Gieseker condition for $t\gg 0$, then intuitively $\sigma^{H'}_t$-stability coincides with $H$-Gieseker stability for $t\gg 0$. This is a version of Bridgeland's large volume limit \cite[Proposition 14.2]{B2}. A proof in our coordinates appears in \cite{bertram} and we will sketch it below for convenience of the reader. It is remarkable that this limit result is independent of the class $H'$. This is one advantage of using our coordinates.

In order to produce a family of stability conditions realizing the change of polarization for the Gieseker moduli spaces, we need to find stability conditions whose only semistable objects are precisely the Gieseker semistable sheaves. We need to remove the asymptotic condition on $t$. This is equivalent to proving that the values of $t$, corresponding to intersections of walls for the class $v$ in $\mbox{Stab}(X)$ with the ``ray'' $\{\sigma^{H'}_t\}_t$, are bounded above. Assume without loss of generality that $(H')^2=HH'$. We will prove the following: 

\begin{theorem}\label{main}
Fix $L\in \Pic(X)_{\Q}$. Consider the vector $\alpha_t=(1,L+D_t,d_t)$, where as before $d_t$ is chosen such that $\langle v,\alpha_t\rangle=0$. Assume that $H$ and $H'$ are in the same chamber of $\mbox{Amp}(X)_{\Q}$ for $L$-twisted $H$-Gieseker stability. Then there exists $t_0>0$ such that for all $t>t_0$, an object $E$ of class $v$ is $\sigma_t^{H'}$-stable precisely if it is an $L$-twisted $H$-Gieseker stable sheaf.
\end{theorem}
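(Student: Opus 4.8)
The plan is to exploit the special feature of these coordinates that turns the wall condition into a \emph{linear} condition in $t$. Because $d_t$ is chosen so that $\langle v,\alpha_t\rangle=0$, formula \eqref{central_charge} gives $\Re Z_{\alpha_t}^{H'}(E)=-\langle v,\alpha_t\rangle=0$, so $Z_{\alpha_t}^{H'}(E)=\sqrt{-1}\,\langle v,\alpha_t H'\rangle$ is purely imaginary with positive imaginary part. Hence, as already recorded in \eqref{bridgelandcondition}, a sheaf $E$ of class $v$ lying in $\mathcal{A}_t^{H'}$ is $\sigma_t^{H'}$-(semi)stable exactly when $\langle ch(A),\alpha_t\rangle\leq 0$ (resp. $<0$) for every sub-object $A\hookrightarrow E$ in the heart. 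Since $\alpha_t$ is affine-linear in $t$, we have $\langle ch(A),\alpha_t\rangle = r(A)\bigl(\Phi(A)+t(\mu_H(A)-\mu_H(v))\bigr)$, where $\Phi(A)=\chi(A\otimes L)/r(A)-\chi(E\otimes L)/r(E)$ is the $t$- and $H'$-independent twisted-slope term of \eqref{gieseker_as_a_pairing}. Thus the ``wall'' attached to a class $ch(A)$ is the single value $t_A=\Phi(A)/(\mu_H(v)-\mu_H(A))$ when $\mu_H(A)\neq\mu_H(v)$, and the pairing is constant in $t$ when $\mu_H(A)=\mu_H(v)$. The theorem therefore reduces to reconciling the finite-$t$ condition over sub-objects of the heart with the asymptotic condition \eqref{gieseker_as_a_pairing} over honest subsheaves.

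First I would reduce sub-objects of the heart to subsheaves and secure membership in the heart. Since $L$-twisted $H$-Gieseker semistable sheaves of class $v$ form a bounded family, the $H'$-slopes of their Harder--Narasimhan factors are uniformly bounded below; combined with the criterion that a sheaf $\mathcal{F}$ lies in $\mathcal{A}_t^{H'}$ iff $\mu_{H'}(F_n)>K_XH'/2-tHH'$, there is $t_1$ so that for $t>t_1$ every such $E$ lies in $\mathcal{A}_t^{H'}$. If $E\in\mathcal{A}_t^{H'}$ is a sheaf and $0\to A\to E\to B\to 0$ is exact in the heart, the cohomology sequence forces $\mathcal{H}^{-1}(A)=0$, so $A$ is a sheaf in $\mathcal{Q}_t^{H'}$ whose image $\overline A\subseteq E$ is a genuine subsheaf with kernel $K=\mathcal{H}^{-1}(B)\in\mathcal{F}_t^{H'}$; then $\langle ch(A),\alpha_t\rangle=\langle ch(\overline A),\alpha_t\rangle+\langle ch(K),\alpha_t\rangle$ and the membership $K\in\mathcal{F}_t^{H'}$ controls the second term. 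Applying the same cohomology analysis to $\mathcal{H}^{-1}(E)[1]\hookrightarrow E$ shows conversely that a $\sigma_t^{H'}$-stable object of class $v$ must be a sheaf once $t$ is large.

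The heart of the argument, and the step I expect to be the main obstacle, is bounding the destabilizing walls: producing $t_0$ beyond which no sub-object violates $\langle ch(A),\alpha_t\rangle\le 0$ except along the Gieseker walls. The difficulty is that the imaginary-part constraint $0\le\langle ch(A),\alpha_t H'\rangle\le\langle v,\alpha_t H'\rangle$ weakens as $t\to\infty$, so finiteness cannot come from the phase alone. I would instead invoke the Bogomolov inequality: a $\sigma_t^{H'}$-semistable sub-object and its quotient satisfy $c_1(\,\cdot\,)^2-2r(\,\cdot\,)ch_2(\,\cdot\,)\ge 0$, which together with the fixed value of this discriminant on $v$ confines $c_1(A)/r(A)$ to a bounded region and leaves only finitely many numerical classes able to destabilize a sheaf of class $v$ along the ray; this is the boundedness of Lo--Qin and Maciocia \cite{LQ,MACIO} in these coordinates. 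Each such class with $\mu_H(A)<\mu_H(v)$ contributes a single finite wall $t_A$, and setting $t_0=\max(t_1,\max_A t_A)$ bounds them all. The remaining delicate case is $\mu_H(A)=\mu_H(v)$, where $\langle ch(A),\alpha_t\rangle=r(A)\Phi(A)$ is constant in $t$: here the hypothesis that $H$ and $H'$ lie in the same chamber guarantees that the slope-equality walls $\mathcal{H}_A$ computed with $H$ and with $H'$ coincide, so such a class sits in $\mathcal{F}_t^{H'}$ or $\mathcal{Q}_t^{H'}$ consistently with its $H$-behaviour and creates no spurious wall; these are precisely the classes recording the twisted condition $\Phi(A)<0$.

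Finally I would assemble the two directions. For $t>t_0$ and $E$ of class $v$ a sheaf in the heart, the reduction of the second paragraph shows that $\langle ch(A),\alpha_t\rangle\le 0$ holds for all sub-objects of the heart exactly when it holds for all subsheaves, which by \eqref{gieseker_as_a_pairing} is $L$-twisted $H$-Gieseker semistability; upgrading $\le$ to $<$ yields the stable statement. Conversely, a $\sigma_t^{H'}$-stable object of class $v$ is a sheaf by the second paragraph and then $L$-twisted $H$-Gieseker stable by the same equivalence. The decisive coordinate choice is the one making $Z_{\alpha_t}^{H'}(v)$ purely imaginary, which collapses every wall to the single linear equation $\langle ch(A),\alpha_t\rangle=0$ and keeps the boundedness bookkeeping of the third paragraph transparent.
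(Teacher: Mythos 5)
Your setup (the linearity of $\langle ch(A),\alpha_t\rangle$ in $t$, the reduction of heart-subobjects to sheaf data, and the large-$t$ identification with the twisted Gieseker condition) matches the paper's framing, but the step you yourself flag as the main obstacle --- bounding the walls on the ray $\{\sigma_t^{H'}\}_t$ --- is where the proposal breaks down. You dispose of it by invoking the Bogomolov inequality and "the boundedness of Lo--Qin and Maciocia in these coordinates." But Maciocia's Theorem~\ref{maciocia} applies to half-planes in which the varying part of the degree-one twist is \emph{parallel to the polarization defining the heart}; here the twist moves along $H$ while the heart is built from $H'$, and the ray only embeds in a Maciocia half-plane when $H'=H$. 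The paper explicitly remarks that the standard Lo--Qin technique fails in this situation: because the categories $\mathcal{A}_t^{H'}$ change with $t$, the set of subobjects that could destabilize a fixed twisted-semistable sheaf for \emph{some} $t>t_0$ is not bounded, so the "finitely many numerical classes" claim is exactly what needs proof, not a citation. Relatedly, your assertion that $K=\mathcal{H}^{-1}(B)\in\mathcal{F}_t^{H'}$ "controls" $\langle ch(K),\alpha_t\rangle$ is unsupported: membership in $\mathcal{F}_t^{H'}$ constrains the \emph{imaginary} part $\langle ch(K),\alpha_t H'\rangle\le 0$, not the real part that enters the wall inequality, and it is precisely such non-subsheaf subobjects (with $\mathcal{H}^{-1}(B)\neq 0$) that could create unbounded walls.

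The paper's actual mechanism is different and is the content you are missing: it introduces the two-dimensional family $\sigma_{s,t}$ attached to $-\tfrac{K}{2}+sH+tH'$ (with heart always defined by $H'$, and $(H')^2=HH'$ so that $\mathcal{A}_{s,t}$ depends only on $s+t$), proves via Lemma~\ref{important} and Corollary~\ref{important2} that all walls in the closed quadrant are lines of nonpositive slope, observes that on the axis $s=0$ the ray does lie in a Maciocia half-plane and hence has bounded walls, and then shows by an induction on the Harder--Narasimhan filtration of a putative destabilizer $A$ (tracking where its HN factors exit the tilted category along the wall-line) that any wall meeting the $(s,0)$-axis beyond $M=\max\{S,T\}$ would have to extend to meet the $(0,t)$-axis beyond $T$, a contradiction. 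Without this transfer argument (or a genuine substitute), the existence of $t_0$ for $H'\neq H$ is not established, so the proposal has a real gap at its central step.
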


We will postpone the proof of Theorem \ref{main} to the end of this section. 

Continuing with the discussion, if an object $E$ of class $v$ is $\sigma^{H'}_t$-semistable for $t\gg 0$, then $E$ must be a sheaf since otherwise $\mathcal{H}^{-1}(E)[1]$ would destabilize $E$ for large values of $t$. Since the stability condition \eqref{bridgelandcondition} (for the class $v$) is equivalent to the $L$-twisted $H$-Gieseker condition, then every sheaf that is $\sigma^{H'}_t$-semistable for all $t$ large enough should be $L$-twisted $H$-Gieseker semistable. Conversely, if $E\in \mathcal{A}_t^{H'}$ is an $L$-twisted semistable sheaf with $ch(E)=v$ then no subobject $A\hookrightarrow E$ can destabilize $E$ for sufficiently large $t$. Indeed, if $A$ is a subobject of $E$ for $t\gg 0$ then $A$ must be a subsheaf and therefore can not destabilize $E$ since 
$$
\langle ch(A),\alpha_t\rangle= r(A)\left(\frac{\chi(A\otimes L)}{r(A)}-\frac{\chi(E\otimes L)}{r(E)}\right)+ r(A)\left(\mu_{H}(A)-\mu_{H}(E)\right)t.
$$   

When $H=H'$, the existence of $t_0$ in Theorem \ref{main} follows from a boundedness result of Maciocia. The standard coordinates for Bridgeland stability conditions introduced by Bridgeland for K3 surfaces in \cite{B2} and generalized by Arcara and the first author in \cite{AB} depend on two numerical classes $\beta, H\in N^1(X)_{\Q}$ with $H$ ample and are given by the vectors
$$
\alpha_{\beta, tH} =\left(1, -\beta, \frac{\beta^2}{2}-\frac{t^2H^2}{2}\right),
$$ 
that clearly satisfy the Bogomolov inequality for every $t>0$. This choice of vectors is natural from the physics point of view since the corresponding central charge takes the form
$$
\int_X e^{-\beta-\sqrt{-1}tH}v=-\langle v, \alpha_{\beta, tH}\rangle+\sqrt{-1}\langle v, \alpha_{\beta, tH}H\rangle.
$$

\begin{theorem}[\cite{MACIO}]\label{maciocia}
Write $\beta=x_0H+u_0G$ for some divisor $G$ with $G\cdot H=0$ and $G^2=-1$. For $x\in \R$, let $\beta_x=xH+u_0G$. Then, in the half plane 
$$
\Pi_{u_0}=\{\sigma^H_{\alpha_{x,y}}\colon \alpha_{x,y}=(1,-\beta_x,\frac{1}{2}(\beta_x^2-y^2H^2)),\ \ x\in\R,\ y>0\}\subset \Stab(X)
$$
the walls are semicircles of bounded center and radius.
\end{theorem}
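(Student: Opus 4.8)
The plan is to reduce the statement to an explicit computation of the defining equation of a wall, followed by a boundedness estimate coming from the Bogomolov inequality. Throughout I fix the class $v=(r,c,d)$ and write $p=H^2>0$. First I would make the wall equation explicit. A (numerical) wall in $\Pi_{u_0}$ is the locus of those $\sigma^H_{\alpha_{x,y}}$ for which some class $w=(r',c',d')$ admits $Z_{x,y}(w)$ parallel to $Z_{x,y}(v)$, i.e. $\Phi(x,y):=\mathrm{Re}\,Z_{x,y}(w)\,\mathrm{Im}\,Z_{x,y}(v)-\mathrm{Im}\,Z_{x,y}(w)\,\mathrm{Re}\,Z_{x,y}(v)=0$. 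Using \eqref{central_charge} and $\beta_x=xH+u_0G$ with $GH=0$, $G^2=-1$, one computes that $\mathrm{Im}\,Z_{x,y}(w)=c'H-r'px$ is independent of $y$ and affine in $x$, while $\mathrm{Re}\,Z_{x,y}(w)=\tfrac{r'p}{2}(y^2-x^2)+\tfrac{r'u_0^2}{2}+(c'H)x+u_0(c'G)-d'$. Substituting, the cubic terms in $x$ cancel and the coefficients of $x^2$ and of $y^2$ both equal $\tfrac{p}{2}m$, where $m=r'(cH)-r(c'H)$. Hence when $m\neq0$ the wall is the Euclidean circle $\tfrac{pm}{2}(x^2+y^2)+Bx+C=0$, centered on the real axis, and only its upper half meets $\Pi_{u_0}$; the degenerate case $m=0$ ($w$ and $v$ of equal $\mu_H$-slope) gives a vertical line and I would treat it separately. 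This already yields the ``semicircle'' part and produces explicit formulas for the center $x_0=-B/(pm)$ and the radius $\rho$ in terms of the invariants of $w$.

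Next I would bound $x_0$ and $\rho$. At any point of a genuine wall the object of class $v$ is strictly $\sigma^H_{\alpha_{x,y}}$-semistable, so it has a Jordan--H\"older factorization; choosing a destabilizing subobject and its quotient gives classes $w$ and $v-w$, each represented by a $\sigma^H_{\alpha_{x,y}}$-semistable object. The support property for these stability conditions supplies a quadratic form $\bar\Delta_H$, built from the Bogomolov inequality $\alpha_1^2>2\alpha_0\alpha_2$ together with the Hodge index theorem, that is nonnegative on every semistable class, so $\bar\Delta_H(w)\ge0$ and $\bar\Delta_H(v-w)\ge0$. Because $Z_{x,y}(w)$ and $Z_{x,y}(v-w)$ lie on a common ray along the wall, the bilinear form associated to $\bar\Delta_H$ has a definite sign on the pair $(w,v-w)$, and expanding $\bar\Delta_H(v)=\bar\Delta_H(w)+\bar\Delta_H(v-w)+2B_H(w,v-w)$ yields $0\le\bar\Delta_H(w)\le\bar\Delta_H(v)$.

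Finally I would convert this discriminant bound into the geometric bound. Since adding a multiple of $v$ to $w$ does not change the wall, I may normalize $w$ modulo $\Z v$; after this normalization the inequality $0\le\bar\Delta_H(w)\le\bar\Delta_H(v)$ together with the slope-matching equation defining the wall confines the invariants $(r',\,c'H,\,c'G,\,d')$ to a bounded set, and substituting into the explicit formulas from the first step bounds $x_0$ and $\rho$. Thus every wall lies in a fixed compact region of $\Pi_{u_0}$. I expect the main obstacle to be precisely this last translation: establishing the sign of the cross term $B_H(w,v-w)$ on the wall, and then controlling the components of $c'$ and $d'$ that $\bar\Delta_H$ does not see directly (this is where the wall equation, rather than the discriminant alone, must be used), together with the bookkeeping for rank-zero subobjects and for the degenerate vertical-line walls where $m=0$.
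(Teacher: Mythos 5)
This statement is imported verbatim from Maciocia's paper (\cite{MACIO}); the present paper gives no proof of it, so there is no in-paper argument to compare against, and your proposal must be measured against Maciocia's original proof. Your computation of the wall equation is correct (I checked that $\Im Z_{x,y}(w)=c'H-r'px$, that the cubic terms cancel, and that the $x^2$ and $y^2$ coefficients both equal $\tfrac{p}{2}\bigl(r'(cH)-r(c'H)\bigr)$), so the ``semicircles centered on the real axis'' part is fine and is essentially how everyone, including Maciocia, obtains it. For boundedness, however, you take a genuinely different route: Maciocia's argument is more hands-on --- he exploits the fact that a wall is a semicircle meeting the $x$-axis at two points, analyzes which Harder--Narasimhan factors of the destabilizing subobject and quotient can lie in $\mathcal{T}_x$ and $\mathcal{F}_x$ at those endpoints to pin down the ranks and $H$-degrees, and then applies the Bogomolov inequality to the sheaf-theoretic pieces directly. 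Your route is the Bayer--Macr\`{\i} support-property argument: nonnegativity of a quadratic form $\bar\Delta_H$ on semistable classes, the signature computation on the two-plane $\langle v,w\rangle$ giving $B_H(w,v-w)\geq 0$ along the wall, and hence $0\leq\bar\Delta_H(w)\leq\bar\Delta_H(v)$. That approach is cleaner and more general (it works for any stability condition satisfying the support property with a fixed form), whereas Maciocia's is more elementary and self-contained. Two caveats: your sketch defers exactly the two points that carry the real content, namely the sign of the cross term $B_H(w,v-w)$ (which requires the signature-$(1,1)$ argument on the two-plane and a check that $w$ and $v-w$ lie in the same positive cone, using $\Im Z(w),\Im Z(v-w)\geq 0$), and the control of $c'\cdot G$ and the part of $c'$ orthogonal to both $H$ and $G$, which $\bar\Delta_H$ does not see and which must be recovered from the Hodge index theorem together with the full discriminant $\Delta(w)=(c')^2-2r'd'$ before the radius can be bounded. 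You flag both, correctly, as the main obstacles; as written the proposal is a sound plan rather than a complete proof.
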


In particular, if $-\beta=-\frac{K_X}{2}+L+tH$ and $H'=H$ then the ray $\{\sigma^{H'}_t\}_{t}$ embeds as a one-parameter family in one of Maciocia's half planes. 

We remark that the standard techniques used in \cite{LQ} to show that walls are bounded do not work for us since the categories $\mathcal{A}^{H'}_t$ are changing for every $t$ and therefore the set of objects destabilizing an $L$-twisted $H$-Gieseker semistable sheaf for some $t>t_0$ is not necessarily bounded. Our strategy is to extend Maciocia's result to the case when $H$ and $H'$ are in the same chamber for slope stability, it will then follow that $H$ and $H'$ can be chosen in the same chamber for Gieseker stability. We start with the following:

\begin{lemma}\label{important}
If an $L$-twisted $H$-Gieseker semistable sheaf $E$ of class $v$ is $\sigma_{t_0}^{H'}$-semistable for some $t_0$, then $E$ is $\sigma_t^{H'}$-semistable for all $t\geq t_0$.
\end{lemma}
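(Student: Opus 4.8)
The plan is to exploit two inputs already available in the excerpt. First, by the large volume limit discussion following the statement of Theorem \ref{main}, our $L$-twisted $H$-Gieseker semistable sheaf $E$ is automatically $\sigma_t^{H'}$-semistable for all $t\gg 0$. Second, since $\langle v,\alpha_t\rangle=0$ the object $E$ sits on the positive imaginary axis, so by \eqref{bridgelandcondition} it is $\sigma_t^{H'}$-semistable precisely when every subobject $A\hookrightarrow E$ in $\mathcal{A}_t^{H'}$ satisfies $f_A(t):=\langle ch(A),\alpha_t\rangle\le 0$. I would first record that $f_A$ is affine-linear in $t$ with slope $\frac{d}{dt}f_A=\langle ch(A),(0,H,-\mu_H(v))\rangle=r(A)(\mu_H(A)-\mu_H(E))$. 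Two monotonicities then drive everything: because $\langle ch(\,\cdot\,),\alpha_tH'\rangle$ is strictly increasing in $t$ on positive-rank sheaves, the torsion class $\mathcal{Q}_{\alpha_t}^{H'}$ grows and the torsion-free class $\mathcal{F}_{\alpha_t}^{H'}$ shrinks as $t$ increases. Finally, since $E$ is a sheaf, the cohomology sequence of $0\to A\to E\to B\to 0$ forces $\mathcal{H}^{-1}(A)=0$, so every subobject $A$ of $E$ is itself a sheaf lying in $\mathcal{Q}_{\alpha_t}^{H'}$.

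With these in hand I would run a connectedness argument on the $t$-ray. Let $I=\{t\ge t_0: E \text{ is }\sigma_t^{H'}\text{-semistable}\}$; it contains $t_0$ and, by the previous paragraph, a neighborhood of $+\infty$, and since instability is an open condition in $t$ the set $I$ is closed. Hence if $I\ne[t_0,\infty)$ there is a first point $a\ge t_0$ at which $E$ leaves $I$ from above: $E$ is $\sigma_a^{H'}$-semistable but $\sigma_t^{H'}$-unstable for $t\in(a,a+\varepsilon)$. At $a$ the object $E$ is strictly semistable, and the factor responsible for instability just above $a$ is a subobject $A\hookrightarrow E$ with $f_A(a)=0$; since $f_A(t)>0$ for $t\in(a,a+\varepsilon)$ and $f_A$ is linear, its slope is positive, i.e.\ $\mu_H(A)>\mu_H(E)$. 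Now $A$ is a sheaf in $\mathcal{Q}_{\alpha_a}^{H'}$, hence by the growth of $\mathcal{Q}$ it remains in $\mathcal{Q}_{\alpha_t}^{H'}$ for every $t\ge a$. If $A$ were a genuine subsheaf of $E$ (equivalently $E/A\in\mathcal{Q}_{\alpha_t}^{H'}$ for $t\ge a$) it would stay a destabilizing subobject with $f_A(t)>0$ for all $t>a$, contradicting the $\sigma_t^{H'}$-semistability of $E$ at large $t$. This already settles the case of subsheaf destabilizers.

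The main obstacle is that a subobject $A$ of $E$ in the tilted heart need not be a subsheaf: the cohomology sequence reads $0\to K\to A\to E\to Q\to 0$ with $K=\mathcal{H}^{-1}(B)\in\mathcal{F}_{\alpha_a}^{H'}$ and $Q\in\mathcal{Q}_{\alpha_a}^{H'}$, and when $K\ne 0$ the cokernel $B$ can leave the heart as $t$ increases, because $\mathcal{F}_{\alpha_t}^{H'}$ shrinks; thus $A$ may cease to be a subobject at some $b>a$, allowing $E$ to recover semistability there. Ruling out such a bounded ``instability island'' $(a,b)$ is the crux. My strategy is to control $K$: writing $A'=\mathrm{im}(A\to E)\subseteq E$, additivity gives $r(A)(\mu_H(A)-\mu_H(E))=r(K)(\mu_H(K)-\mu_H(E))+r(A')(\mu_H(A')-\mu_H(E))$, and $L$-twisted $H$-Gieseker semistability forces $\mu_H(A')\le\mu_H(E)$, so a positive slope for $A$ forces $\mu_H(K)>\mu_H(E)$. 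On the other hand $K\in\mathcal{F}_{\alpha_a}^{H'}$ bounds $\mu_{H'}(K)$ above by a quantity that decreases in $a$. Using the normalization $(H')^2=HH'$ and the running hypothesis that $H,H'$ lie in one slope chamber (so the signs of $\mu_H-\mu_H(E)$ and $\mu_{H'}-\mu_{H'}(E)$ agree on the finitely many relevant classes), these two inequalities become incompatible, eliminating the island. I expect this reconciliation of the $H$-slope lower bound from Gieseker stability with the $H'$-slope upper bound from membership in $\mathcal{F}_{\alpha_a}^{H'}$ to be the technical heart, and the precise point where Maciocia's boundedness (Theorem \ref{maciocia}) and the same-chamber hypothesis enter.
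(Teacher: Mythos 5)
Your reduction to a first wall $t=a$, the computation of the slope of $f_A(t)=\langle ch(A),\alpha_t\rangle$, and the elimination of subsheaf destabilizers via twisted Gieseker semistability all agree with the paper. The gap is in your treatment of the case $K=\mathcal{H}^{-1}(B)\neq 0$, which you correctly single out as the crux. You derive $\mu_H(K)>\mu_H(E)$ and $\mu_{H'}(K)<\mu_{H'}(E)$ and want to contradict the hypothesis that $H$ and $H'$ lie in one slope chamber. But that hypothesis only forbids sign changes across the \emph{locally finite} family of hyperplanes $\mathcal{H}_A$ attached to subsheaves $A$ of slope-semistable sheaves of class $v$; the boundedness from \cite{MW} is exactly what makes this a wall-and-chamber decomposition. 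The sheaf $K$ is a subsheaf of the destabilizer $A$, not of $E$, and its Chern character is not controlled --- as the paper itself remarks when explaining why the techniques of \cite{LQ} fail here, the set of potential destabilizing objects along the ray is unbounded because the hearts $\mathcal{A}_t^{H'}$ vary with $t$. For an arbitrary class $w=ch(K)$ the hyperplane $\{\mu_{\bullet}(w)=\mu_{\bullet}(v)\}$ can perfectly well separate $H$ from $H'$ inside a single chamber of the decomposition for $v$, so the pair of inequalities $\mu_H(K)>\mu_H(E)$ and $\mu_{H'}(K)<\mu_{H'}(E)$ is not a contradiction. (Note also that the paper's proof of this lemma uses no same-chamber hypothesis at all, and the lemma is later fed into Corollary \ref{important2} and Theorem \ref{main2} in situations where one of the polarizations sits on a wall; an argument leaning on that hypothesis would prove a strictly weaker statement.)

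The paper closes the case $\mathcal{H}^{-1}(B)\neq 0$ by an induction on the rank of the destabilizer instead of a slope comparison: let $t_2>t_1$ be the value at which the first $H'$-semistable Harder--Narasimhan factor $F_1$ of $\mathcal{H}^{-1}(B)$ exits $\mathcal{F}_{\alpha_t}^{H'}$, i.e.\ $\mu_{H'}(F_1)=\frac{K_XH'}{2}-t_2(H')^2$. At $t_2$ one has $\langle ch(F_1),\alpha_{t_2}\rangle<0$ and $\langle ch(B),\alpha_{t_2}\rangle<0$, hence $\langle ch(B/F_1[1]),\alpha_{t_2}\rangle<0$, so the kernel $\mathcal{K}$ of $E\to B/F_1[1]$ is again a destabilizer, now of rank $r(A)-r(F_1)<r(A)$. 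By induction $\mathcal{K}$ already destabilizes $E$ at $t_1$, and iterating eventually produces a genuine destabilizing subsheaf, which is impossible. To salvage your scheme you would need to replace the appeal to the chamber structure by some such device that trades a non-subsheaf destabilizer for one of smaller rank, or otherwise bounds $ch(K)$.
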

\begin{proof}

First, notice that $E\in\mathcal{A}_t^{H'}$ for all $t\geq t_0$. If  $E$ is unstable for some $t>t_0$ then there is $t_1\geq t_0$ such that $E$ is $\sigma_t^{H'}$-semistable for all $t\in[t_0,t_1]$ and strictly semistable at $t_1$. Notice that if $0\rightarrow A\rightarrow E\rightarrow B\rightarrow 0$ is a short exact sequence in $\mathcal{A}_{t_1}^{H'}$ with $\langle ch(A), \alpha_{t_1}\rangle=0$ and $\langle ch(A), \alpha_{t}\rangle>0$ for all $t>t_1$, then $A$ can not be a subsheaf of $E$ since in such case we would have
$$
\langle ch(A),\alpha_t\rangle=r(A)\left( \chi(A\otimes L)-\frac{\chi(E\otimes L)}{r(E)}+ \left(\mu_{H}(A)-\mu_{H}(E)\right)t\right)\leq 0 \ \ \text{for}\ \ t>t_1,
$$ 
due to the $L$-twisted $H$-Gieseker semistability of $E$. In particular $A$ can not have rank 1.

 Assume that $\mathcal{H}^{-1}(B)\neq 0$ and let $t_2>t_1$ such that the first $H'$-semistable factor $F_1$ of $\mathcal{H}^{-1}(B)$ has slope $\mu_{H'}(F_1)=\frac{K\cdot H'}{2}-t_2(H')^2$. Then $\langle ch(F_1),\alpha_{t_2}\rangle<0$ and therefore the quotient $B/F_1[1]$ in $\mathcal{A}_{t_2}^{H'}$ satisfies $\langle ch(B/F_1[1]),\alpha_{t_2}\rangle<0$. Thus, if $\mathcal{K}$ denotes the kernel in $\mathcal{A}_{t_2}^{H'}$ of the map $E\rightarrow B/F_1[1]$ then $\langle ch(\mathcal{K}),\alpha_{t_2}\rangle>0$. Since $r(\mathcal{K})=r(A)-r(F_1)$ then by induction on the rank of destabilizing subobjects of $E$ for $t>t_0$, we know that $\mathcal{K}$ destabilizes $E$ for all $t\leq t_2$ as long as $\mathcal{K}$ is a subobject of $E$ in $\mathcal{A}_t^{H'}$, in particular $\mathcal{K}$ destabilizes $E$ at $t_1$. Thus $\mathcal{H}^{-1}(B)=0$ and therefore $A$ is a subsheaf of $E$ and can not destabilize $E$ for any $t>t_1$.
\end{proof}

Assume that $H$ and $H'$ are ample classes such that the set $\{sH+tH'\colon s,t>0\}$ is contained in a chamber for slope stability for the class $v$. We can consider the two-dimensional family of stability conditions $\sigma_{s,t}:=\sigma_{\alpha_{s,t}}^{H'}$ given by the vectors
$$
\alpha_{s,t}=(1,-\frac{K}{2}+sH+tH',d_{s,t}),
$$
where $d_{s,t}$ is chosen such that $\langle v,\alpha_{s,t}\rangle=0$. To ease the notation we denote the heart $\mathcal{A}^{H'}_{\alpha_{s,t}}$ by $\mathcal{A}_{s,t}$. Since our goal is to study the change of polarization for the Gieseker moduli and $M_H(v)\cong M_H(v\cdot ch(A))$ for any line bundle $A$, then by twisting the class $v$ if necessary we can assume that 
\begin{equation}\label{assumption}
\frac{K^2}{8} > \chi(\mathcal{O})-\frac{\chi(v)}{r(v)},\ \ \text{and}\ \ \mu_{H'}(v)>\frac{KH'}{2}. 
\end{equation}

This assumption is harmless since for instance it can be arranged by twisting by $nH$ for $n$ sufficiently large and divisible. This guarantees that $\sigma_{s,t}$ is a stability condition for all $s,t\geq 0$ and that every $H'$-semistable object of class $v$ is in the category $\mathcal{A}_{s,t}$. Under these assumptions we can describe the walls in the two-dimensional slice $\{\sigma_{s,t}\}_{s,t\geq 0}$.

\begin{corollary}\label{important2}
Every wall for the class $v$ in the quadrant $\{\sigma_{s,t}\}_{s,t\geq 0}$ of stability conditions is a line of nonpositive slope, it has slope zero (or infinity) if and only if $H$ (or $H'$) is on a wall in the wall and chamber decomposition of the ample cone for Gieseker stability with respect to the class $v$.  
\end{corollary}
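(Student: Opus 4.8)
The plan is to collapse each wall to a single affine-linear equation in the parameters $(s,t)$ and then to read off both its slope and the meaning of its coefficients. By \eqref{bridgelandcondition}, a wall for $v$ inside the quadrant is a locus on which some subobject $A\hookrightarrow E$, $ch(E)=v$, satisfies $\langle ch(A),\alpha_{s,t}\rangle=0$. Expanding the Poincar\'e pairing and using the defining relation $\langle v,\alpha_{s,t}\rangle=0$ (which makes $d_{s,t}$ affine-linear in $(s,t)$), I would write, with $\Delta_A:=c_1(A)-\tfrac{r(A)}{r(v)}c_1(v)$,
\[
\langle ch(A),\alpha_{s,t}\rangle=(\Delta_A\cdot H)\,s+(\Delta_A\cdot H')\,t+C,
\]
and then identify the three coefficients: $\Delta_A\cdot H=r(A)(\mu_H(A)-\mu_H(v))$, $\Delta_A\cdot H'=r(A)(\mu_{H'}(A)-\mu_{H'}(v))$, and --- after using Riemann--Roch to rewrite $ch_2-\tfrac12 c_1 K$ through $\chi$ --- the constant $C=r(A)\big(\tfrac{\chi(A)}{r(A)}-\tfrac{\chi(v)}{r(v)}\big)$. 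Thus each wall is the zero locus of an affine-linear function, hence a line.

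For the sign of the slope, the line $(\Delta_A\cdot H)s+(\Delta_A\cdot H')t+C=0$ has slope $-(\Delta_A\cdot H)/(\Delta_A\cdot H')=-(\mu_H(A)-\mu_H(v))/(\mu_{H'}(A)-\mu_{H'}(v))$ in the $(s,t)$-plane. Here I would invoke the standing hypothesis that the open quadrant $\{sH+tH'\colon s,t>0\}$ lies in a single chamber for slope stability: the linear form $H''\mapsto \Delta_A\cdot H''=r(A)(\mu_{H''}(A)-\mu_{H''}(v))$ cannot vanish on the interior of the quadrant, for otherwise the slope wall $\mathcal{H}_A$ would cross it. Consequently $\Delta_A\cdot H$ and $\Delta_A\cdot H'$ cannot have strictly opposite signs, so the slope is $\leq 0$ whenever both are nonzero, and the boundary cases are precisely the horizontal ($\Delta_A\cdot H=0$) and vertical ($\Delta_A\cdot H'=0$) walls.

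It remains to match the boundary walls with Gieseker walls. Slope zero means $\Delta_A\cdot H=0$ and $\Delta_A\cdot H'\neq 0$, i.e. $\mu_H(A)=\mu_H(v)$, so $H\in\mathcal{H}_A$; the wall is then the horizontal line $t=t_\ast:=-C/(\Delta_A\cdot H')$. To promote this slope wall to a Gieseker wall I would use the large-volume limit: as $t\to\infty$ the middle coordinate of $\alpha_{s,t}$ is dominated by $tH'$, so Theorem \ref{main} (applied with $H'$ as the Gieseker polarization and the fixed $sH$ absorbed into the twist) identifies $\sigma_{s,t}$-stability with twisted $H'$-Gieseker stability for $t\gg 0$; in particular the object $E$ carried by the wall is semistable for $t\gg0$, and by the subsheaf reduction in the proof of Lemma \ref{important} its destabilizer $A$ may be taken to be a subsheaf. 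This $t\gg0$ semistability forces $\langle ch(A),\alpha_{s,t}\rangle\leq0$ for large $t$, whence $\Delta_A\cdot H'<0$; therefore $t_\ast=-C/(\Delta_A\cdot H')$ is $\geq 0$, i.e. lies in the quadrant, if and only if $C\geq 0$, that is, if and only if $\chi(A)/r(A)\geq\chi(v)/r(v)$ --- exactly the condition that $\mathcal{H}_A$ be a Gieseker wall. Running the implications backwards gives the converse, and the slope-infinity case is identical after interchanging the roles of $(s,H)$ and $(t,H')$ and using the $s\to\infty$ limit ($H$-Gieseker stability) to fix the sign of $\Delta_A\cdot H$.

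I expect the main obstacle to be precisely this last promotion of a slope wall to a Gieseker wall: recovering the \emph{sign} of the constant $C$, not merely the vanishing of a slope coefficient. This is where the Riemann--Roch identification of $C$ with the Euler-characteristic comparison must be coupled with the large-volume limit of Theorem \ref{main} and the subsheaf reduction of Lemma \ref{important} to pin down the sign of $\Delta_A\cdot H'$ (resp. $\Delta_A\cdot H$); without that sign one concludes only that $H$ (resp. $H'$) lies on a slope wall. A secondary technical point is to verify that the numerical locus $\langle ch(A),\alpha_{s,t}\rangle=0$ actually supports a destabilizer inside the quadrant, and that the degenerate case $\Delta_A=0$ contributes no line.
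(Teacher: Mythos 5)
Your reduction of each wall to the affine-linear equation $(\Delta_A\cdot H)s+(\Delta_A\cdot H')t+C=0$ and your identification of the coefficients (via $\langle v,\alpha_{s,t}\rangle=0$ and Riemann--Roch) are correct, and they do show that walls are lines. The gap is in the step that controls the \emph{sign} of the slope. You rule out $\Delta_A\cdot H$ and $\Delta_A\cdot H'$ having opposite signs by saying that otherwise ``the slope wall $\mathcal{H}_A$ would cross'' the cone $\{sH+tH'\}$. But $\mathcal{H}_A$ is a wall in the sense of Section \ref{definitions} only when $A$ injects \emph{as a sheaf} into a \emph{slope-semistable} sheaf of type $v$. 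A destabilizing subobject $A\hookrightarrow E$ in the tilted heart $\mathcal{A}_{s_0,t_0}$ is a sheaf, but the map $A\to E$ has kernel $\mathcal{H}^{-1}(B)$, so $ch(A)$ differs from the Chern character of the actual image subsheaf, and $E$ need not be a slope-semistable sheaf (or a sheaf at all). Hence the chamber hypothesis on the ample cone says nothing a priori about the sign pattern of $\Delta_A$ for such an $A$, and the nonpositivity of the slope does not follow. This subsheaf reduction is not a ``secondary technical point'' confined to the boundary cases as you suggest at the end; it is the crux of the whole statement. The paper sidesteps it entirely for the slope bound: it applies the monotonicity of Lemma \ref{important} in \emph{both} coordinate directions, so that semistability of a Gieseker-semistable sheaf propagates up and to the right in the $(s,t)$-quadrant, which directly forbids walls of positive slope; the $\mathcal{H}^{-1}(B)$ analysis is then used only to show that a \emph{horizontal} wall must come from a genuine subsheaf, whence the Gieseker-wall characterization.

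A second, independent problem is circularity: you invoke Theorem \ref{main} (the large-volume limit for $H\neq H'$) to fix the sign of $\Delta_A\cdot H$ in the vertical-wall case, but in the paper Theorem \ref{main} is \emph{proved using} Corollary \ref{important2}. Along the ray where $t$ varies the polarization of the heart agrees with the direction of variation, so Theorem \ref{maciocia} (Maciocia) supplies the needed limit without circularity; along the ray where $s$ varies it does not, and that is precisely the case Theorem \ref{main} is designed to handle. To repair your argument you would need to first establish, for an arbitrary wall in the quadrant, that the destabilizer can be replaced by a subsheaf of $E$ with the same wall (this is what the induction on rank inside the proof of Lemma \ref{important} accomplishes in the one-parameter setting), and to avoid any appeal to Theorem \ref{main}.
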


\begin{proof}
We first describe the horizontal walls. Assume that there is an inclusion $0\rightarrow A\rightarrow E\rightarrow B\rightarrow 0$ of objects in $\mathcal{A}_{s_0,t_0}$ with $\mu_{H}(A)=\mu_H(E)$. If $\mathcal{H}^{-1}(B)\neq 0$ then there is $t_1>t_0$ such that $\mu_{H'}(F_1)=\frac{K_XH'}{2}-(s_0+t_1)(H')^2$, where $F_1$ is the first $H'$-semistable factor of $\mathcal{H}^{-1}(B)$, then the sheaf quotient $A/F_1$ is a destabilizing subobject of $E$ in $\mathcal{A}_{s_0,t_1}$ contradicting Lemma \ref{important}. Thus $A$ must be a subsheaf of $E$ implying that $H$ is on wall for slope semistability with respect to the class $v$. Moreover, since such wall is given by $t=c>0$ then $A$ should destabilize $E$ with respect $H$-Gieseker stability. If on the other hand 
\begin{equation}\label{onlyhere}
0\rightarrow A\rightarrow E\rightarrow B\rightarrow 0 \tag{$\diamond$}
\end{equation}
is a short exact sequence of sheaves destabilizing $E$ with respect to $H$-Gieseker stability then for 
$$
t_0=\left(\frac{\chi(A)}{r(A)}-\frac{\chi(E)}{r(E)}\right)\big/(\mu_{H'}(E)-\mu_{H'}(A))
$$
and $s$ large enough \eqref{onlyhere} is a short exact sequence in $\mathcal{A}_{s,t_0}$ and so produces a horizontal wall in the quadrant $\{\sigma_{s,t}\}_{s,t\geq 0}$.

Assume that $\mathcal{W}$ is a wall with positive slope destabilizing a sheaf $E$ of class $v$ that is both $H$-stable and $H'$-stable. We can assume that $E$ is destabilized in a small open set of $\mathcal{W}$ containing the point $(s_0,t_0)$. By Lemma \ref{important} $E$ is also $\sigma_{s_0+\epsilon,t_0}$-semistable and therefore $\sigma_{s_0+\epsilon,t_0+\delta}$-semistable for all $\delta>0$ contradicting that $\mathcal{W}$ is a wall destabilizing $E$ near $(s_0,t_0)$. The exact same argument shows that if $E$ is $H$-stable and strictly $H'$-semistable then the existence of a vertical wall will prevent the existence of any wall of positive slope destabilizing $E$.
\end{proof}

\begin{rem}\label{general}
We can consider the $n$-dimensional family of stability conditions $\sigma^{H}_{\mathfrak{a}}$ given by the vectors
$$
\alpha_{\mathfrak{a}}=(1,-\frac{K_X}{2}+a_1H_1+\cdots +a_nH_n, d_{\mathfrak{a}})
$$
where all the $H_i$'s are in the closure of the same chamber with respect to Gieseker stability for the class $v$, $a_k\geq 0$ for all $k$, $H$ is an interior point of the cone $\{a_1H_1+\cdots +a_nH_n\colon a_k\geq 0 \}\subset \mbox{Amp}(X)_{\Q}$, and again we choose $d_{\mathfrak{a}}$ such that $\langle v, \alpha_{\mathfrak{a}}\rangle=0$. Then Corollary \ref{important2} remains true: a wall destabilizing is a hyperplane that intersects each ``axis'' non negatively, it is parallel to the $H_i$-axis and lies in $\{\sigma_{\mathfrak{a}}\}_{a_k\geq 0}$ if and only if $H_i$ lies on a wall with respect to Gieseker stability for the class $v$. This can be easily proven by induction on $n$.  
\end{rem}

With the assumptions in \eqref{assumption} we can now prove Theorem \ref{main}.

\begin{proof}[Proof of Theorem \ref{main}]
We prove the case $L=0$, but the proof works for any $\Q$-line bundle $L$. Because of our assumptions and Theorem \ref{maciocia} we know that the walls on the ray $(0,t)$ are finite, therefore there are only finitely many walls intersecting this ray (since at a given point there are only finitely many Chern characters responsible for a wall). All these walls are lines of negative slope and therefore there exists a constant $S>0$ such that the walls coming from the ray $(0,t)$ intersect the ray $(s,0)$ at some $0<s<S$. 

Let $T$ be the upper bound for the walls on the ray $(0,t)$, i.e., for all $t>T$ the only $\sigma_{0,t}$-semistable objects of class $v$ are $H'$-Gieseker semistable sheaves (and therefore $H$-Gieseker semistable). We will prove that the walls in the ray $(s,0)$ are bounded by $M=\max\{S,T\}$. 

Assume that there is a destabilizing sequence 
\begin{equation}\label{exactsequence}
0\rightarrow A\rightarrow E\rightarrow B\rightarrow 0 \tag{$\star$}
\end{equation}
in $\mathcal{A}_{s_0,0}$, destabilizing an $H$-Gieseker semistable sheaf $E$ of class $v$ for some $s_0>M$. The corresponding wall $\mathcal{W}=\mathcal{W}(A,E)$ in the $(s,t)$-plane is a line of negative slope. Notice that $\mathcal{A}_{s_0,0}=\mathcal{A}_{0,s_0}$ and therefore \eqref{exactsequence} is also an exact sequence in $\mathcal{A}_{0,s_0}$. But since $s_0>T$ we know that $E$ is $\sigma_{0,s_0}$-stable and $A$ can not destabilize $E$ at this point. Thus we obtain 
\begin{align*}
\left(\frac{\chi(A)}{r(A)}-\frac{\chi(E)}{r(E)}\right)+s_0(\mu_{H'}(A)-\mu_{H'}(E)) &<0,\\
\left(\frac{\chi(A)}{r(A)}-\frac{\chi(E)}{r(E)}\right)+s_0(\mu_{H}(A)-\mu_{H}(E)) &=0.
\end{align*}

Therefore the slope of $\mathcal{W}$ is $>-1$ and $B\in\mathcal{A}_{s,t}$ for all $(s,t)\in\mathcal{W}$ ($s,t\geq 0$).

Let 
$$
0\rightarrow A_n\rightarrow A\rightarrow F_n\rightarrow 0
$$
be the last part of the Harder-Narasimhan filtration of $A$ with respect to $H'$-stability. Since $A$ can not destabilize $E$ at $(0,t_{\mathcal{W}})=\mathcal{W}\cap \{(0,t)\colon t\geq 0\}$ and $E,B\in \mathcal{A}_{0,t_{\mathcal{W}}}$, then $A$ can not belong to this category. Thus, there is $(s_1,t_1)\in\mathcal{W}$ such that $\mu_{H'}(F_n)=\frac{K_XH'}{2}-(s_1+t_1)(H')^2$ and therefore $\langle ch(A_n), \alpha_{s,t} \rangle>0$ for $(s,t)$ near $(s_1,t_1)$. This implies that the wall $\mathcal{W}'=\mathcal{W}(A_n,E)$ intersects $\mathcal{W}$. The slopes of $\mathcal{W}$ and $\mathcal{W}'$ must be the same, otherwise repeating the argument in the proof of Corollary \ref{important2} (moving right and then up) we will contradict that $\mathcal{W}$ is a wall. By finiteness of the Harder-Narasimhan filtration of $A$ we conclude that $\mathcal{W}$ should extend to intersect the ray $(0,t)$ contradicting the choice of $M$.
\end{proof}


\section{Determinant line bundles}
The choice of $\alpha_t\in v^{\perp}=\{w\colon \langle w,v\rangle=0\}$ not only does come naturally from the Gieseker condition, but it also produces (for large $t$) an ample line bundle on the coarse moduli space $M_H(v)$. 



Let $\mathcal{E}$ be a flat family of sheaves of topological type $v$ on $X$ parametrized by $S$. Denote by $[\mathcal{E}]$ its class in $K^0(X\times S)$, and the projections from $X\times S$ to $X$ and $S$ as in the diagram
$$
\begin{diagram}
\node{X\times S}\arrow{s,r}{p}\arrow{e,t}{q}\node{X}\\
\node{S}\node{}
\end{diagram}
$$                                                                                                                                                                                                  
Notice that $p$ is a smooth morphism, so $p_!: K^0(X\times S)\rightarrow K^0(S)$ is well defined. 
\begin{definition}\label{def3.1}
Define $\lambda_{\mathcal{E}}: K(X)\longrightarrow \Pic(S)$ as the composition of the homomorphisms:
$$
\begin{diagram}\dgARROWLENGTH=1.5em
\node{K(X)}\arrow{e,t}{q^*}\node{K^0(X\times S)}\arrow{e,t}{\cdot[\mathcal{E}]}\node{K^0(X\times S)}\arrow{e,t}{p_!}\node{K^0(S)}\arrow{e,t}{\mbox{det}}\node{\mbox{Pic}(S).}
\end{diagram}
$$
\end{definition}

 This is the ``Fourier--Mukai transform" with kernel $\mathcal{E}$, composed with the determinant homomorphism, which associates to a bounded complex of locally free sheaves $F^{\bullet}$ on $S$  its determinant line bundle.

If $L$ is a line bundle on $S$, it is easy to check that 
\begin{eqnarray}\label{universal}\lambda_{\mathcal{E}\otimes p^*L}(u)\cong\lambda_{\mathcal{E}}(u)\otimes L^{\chi(u\cdot v)}.
\end{eqnarray} 

Assume for the moment that there is a universal sheaf $\mathcal{E}$ on $X\times M^{s}_H(v)$. Such universal sheaf would only be defined up to tensoring with the pull back of a line bundle from the base. If we choose $u$ such that $\chi(u\cdot v)=0$, i.e., $ch(u)\cdot \mbox{td}(X)\in v^{\perp}$, then by (\ref{universal}), $\lambda_{\mathcal{E}}(u)$ would not depend on the ambiguity of the choice of the universal sheaf and therefore would yield a line bundle on $M^s_H(v)$. We will simply write $\lambda(u)$ for this line bundle on $M^s_H(v)$. 

In general, there is no universal sheaf on the coarse moduli space $M_H(v)$. The determinant line bundle is just a line bundle on the moduli stack. This line bundle, however, always descends to $M^s_H(v)$ for $ch(u)\cdot\mbox{td}(X)\in v^{\perp}$. If $M^s_H(v)\subsetneq M_H(v)$, one requires $ch(u)\cdot\mbox{td}(X)\in v^{\perp}\cap \{1,[\mathcal{O}_H]\}^{\perp\perp}$, to guarantee that this line bundle does descend to the whole coarse moduli space $M_H(v)$ (cf., \cite[Theorem 8.1.5]{HL}). 

There are two distinguished determinant line bundles on $M_H(v)$ given by taking
\begin{eqnarray}\label{generator}u_i=-c_0\cdot h^i+\chi(v\cdot h^i)[\bb{C}_x],\  i=0,1,
\end{eqnarray}
where $h=[\mathcal{O}_H]\in K(X)$, and $x\in X$. The line bundle $\mathcal{L}_i:=\lambda(u_i)$ does not depend on the choice of the point $x$. The importance of the line bundles $\mathcal{L}_0$ and $\mathcal{L}_1$ is represented in the following theorem. 

\begin{theorem}{\cite[Theorem 8.1.11]{HL}} The line bundle $\mathcal{L}_0\otimes \mathcal{L}_1^{\otimes m}\in \mbox{Pic}(M_H(v))$ is ample for $m\gg 0$.
\end{theorem}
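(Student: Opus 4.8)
The plan is to prove this as the standard GIT ampleness statement for Gieseker moduli, following the method of Gieseker, Maruyama and Simpson as organized in \cite{HL}. First I would realize $M_H(v)$ as a GIT quotient. Fix $m\gg 0$ so that every $H$-Gieseker semistable sheaf $E$ of type $v$ is $m$-regular; then $E(m)$ is globally generated and $h^0(E(m))=P(m)$, where $P$ is the Hilbert polynomial of $v$. Setting $V=\C^{P(m)}$ and forming the Quot scheme $R$ of quotients $V\otimes\mathcal{O}_X(-m)\twoheadrightarrow E$ of type $v$, the locus $R^{ss}\subset R$ of quotients for which $E$ is semistable and $H^0$ is an isomorphism is $\mathrm{SL}(V)$-invariant, and $M_H(v)=R^{ss}/\!\!/\mathrm{SL}(V)$.

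Second, I would embed $R$ in a Grassmannian to produce the polarization. For $l\gg m$ the assignment $[V\otimes\mathcal{O}_X(-m)\twoheadrightarrow E]\mapsto[V\otimes H^0(\mathcal{O}_X(l-m))\twoheadrightarrow H^0(E(l))]$ is an $\mathrm{SL}(V)$-equivariant locally closed immersion of $R$ into the Grassmannian of $P(l)$-dimensional quotients, under which the Pl\"ucker bundle $\mathcal{O}(1)=\det(\text{universal quotient})$ pulls back to $\det p_*\tilde{E}(l)$, where $\tilde{E}$ is the universal quotient on $X\times R$. In the notation of Definition \ref{def3.1} this is exactly $\lambda_{\tilde{E}}([\mathcal{O}_X(lH)])$, since $p_!\tilde{E}(l)=p_*\tilde{E}(l)$ for $l\gg 0$. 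I would then invoke the fundamental dictionary between GIT and Gieseker stability: for $l\gg m\gg 0$, a point of $R$ is GIT-(semi)stable for the $\mathrm{SL}(V)$-linearized Pl\"ucker bundle if and only if the corresponding sheaf is $H$-Gieseker (semi)stable. Granting this, the GIT quotient is $M_H(v)$, and by Kempf's descent lemma the $\mathrm{SL}(V)$-linearized ample bundle descends to an ample line bundle $\mathcal{A}$ on the coarse space $M_H(v)$.

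Finally, I would identify $\mathcal{A}$ by a $K$-theory computation. Since $[\mathcal{O}_X(lH)]\notin v^\perp$, the bundle $\lambda_{\tilde{E}}([\mathcal{O}_X(lH)])$ carries a nontrivial weight for the central $\mathbb{G}_m\subset\mathrm{GL}(V)$ and does not descend; the $\mathrm{SL}(V)$-linearization corresponds to its unique normalization lying in $v^\perp$, namely the projection of $-c_0[\mathcal{O}_X(lH)]$ along the point class $[\C_x]$ (using $\chi(v\cdot[\C_x])=c_0$), which equals $u_l:=-c_0[\mathcal{O}_X(lH)]+\chi(v\cdot[\mathcal{O}_X(lH)])[\C_x]$ and satisfies $\chi(u_l\cdot v)=0$. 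Expanding $[\mathcal{O}_X(lH)]=h^0+l\,h^1+\binom{l+1}{2}h^2$ in $K(X)$ and using $h^2=H^2[\C_x]$ together with $\chi(v\cdot[\C_x])=c_0$, the point-class contributions cancel and one finds
$$u_l=u_0+l\,u_1.$$
Hence the descended ample bundle is a positive power of $\lambda(u_l)=\lambda(u_0)\otimes\lambda(u_1)^{\otimes l}=\mathcal{L}_0\otimes\mathcal{L}_1^{\otimes l}$, proving ampleness for $l\gg 0$ (which is the assertion, with the stated exponent $m$ playing the role of the Grassmannian level $l$).

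I expect the main obstacle to be the GIT/Gieseker dictionary: matching the Hilbert--Mumford numerical criterion for the Pl\"ucker linearization with Gieseker semistability requires delicate estimates on Hilbert polynomials of saturated subsheaves and the coordinated choice of the two parameters $m\ll l$, and is the technical heart of the argument. A secondary, purely bookkeeping obstacle is tracking the central $\mathbb{G}_m$-weights (and the attendant sign and duality conventions in the definition of $\lambda$) so that it is precisely the normalization $u_l\in v^\perp$, rather than some other lift of $[\mathcal{O}_X(lH)]$, that descends; the computation above is what pins this normalization down as $u_0+l\,u_1$ and thereby isolates the combination $\mathcal{L}_0\otimes\mathcal{L}_1^{\otimes m}$.
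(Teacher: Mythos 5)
The paper gives no proof of this statement---it is quoted directly from \cite[Theorem 8.1.11]{HL}---and your sketch reproduces exactly the argument given there: the Gieseker--Maruyama--Simpson GIT construction of $M_H(v)$ from the Quot scheme, the Grassmannian embedding whose Pl\"ucker polarization pulls back to $\lambda_{\tilde{E}}([\mathcal{O}_X(lH)])$, descent of the normalized class, and the identification $u_l=u_0+l\,u_1$, which your computation using $[\mathcal{O}_X(lH)]=h^0+lh^1+\binom{l+1}{2}h^2$, $h^2=H^2[\C_x]$ and $\chi(v\cdot[\C_x])=c_0$ carries out correctly. The proposal is sound, with the only unproved inputs being the standard ones (boundedness, the Hilbert--Mumford/Gieseker dictionary, Kempf descent) that you correctly identify as the technical heart of \cite{HL}.
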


An easy computation shows
\begin{align*}
(ch(u_0)+tch(u_1))\cdot \mbox{td}(X)&=(-c_0,-c_0(tH),c_1(tH)-\frac{c_0}{2}K_X(tH)+\chi(v))\cdot \mbox{td}(X)\\
&=-c_0(1,-\frac{K_X}{2}+tH,d_t)\\
&=-c_0\alpha_t.
\end{align*}

Thus the determinant line bundle $\mathcal{L}_t:=\lambda(-\alpha_t)\in\mbox{Pic}(M_H(v))_{\R}$ is ample for $t\gg 0$. In fact, it will turn out that $\mathcal{L}_t$ is ample as long as $\sigma_t$-semistability and $H$-Gieseker semistability coincide. 

In \cite{BM}, Bayer and Macr\'i associate to every numerical stability condition $\sigma=(Z,\mathcal{A})\in\mbox{Stab}(X)$ a numerical class $\mathcal{L}_{\sigma}\in N^1(\mathcal{M}_{\sigma})$ on the moduli stack of $\sigma$-semistable objects. If $C$ is an integral projective curve in $\mathcal{M}_{\sigma}(v)$ corresponding to a family $\mathcal{E}\in D^b(C\times X)$, then the class $\mathcal{L}_{\sigma}$ is defined via the intersection rule
$$
\mathcal{L}_{\sigma}\cdot C=\mathfrak{Im}\left(-\frac{Z(\Phi_{\mathcal{E}}(\mathcal{O}_C))}{Z(v)}\right),
$$  
where $\Phi_{\mathcal{E}}\colon D^b(C)\rightarrow D^b(X)$ denotes Fourier--Mukai transform with kernel $\mathcal{E}$. Their main result is:
\begin{lemma}[Positivity Lemma, {\cite[Lemma 3.3]{BM}}] The divisor class $\mathcal{L}_{\sigma}$ is nef: $\mathcal{L}_{\sigma}\cdot C\geq 0$. Further, we have $\mathcal{L}_{\sigma}\cdot C>0$ if and only if for two general closed points $c,c'\in C$, the corresponding objects $\mathcal{E}_c,\mathcal{E}_{c'}\in D^b(X)$ are not $S$-equivalent.
\end{lemma}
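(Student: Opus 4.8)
The plan is to reconstruct the argument of \cite{BM} in the present coordinates, exploiting that the quantity $\mathcal{L}_{\sigma}\cdot C=\mathfrak{Im}\bigl(-Z(\Phi_{\mathcal{E}}(\mathcal{O}_C))/Z(v)\bigr)$ is unchanged when $Z$ is rotated by a unit complex scalar. First I would normalize. Replacing $C$ by its normalization $\nu\colon\widetilde{C}\to C$—a birational map, so that $\mathcal{L}_{\sigma}\cdot\widetilde{C}=\mathcal{L}_{\sigma}\cdot C$ by the projection formula and general points still correspond—I may assume $C$ is a smooth projective curve. Rotating the central charge so that $Z(v)\in\R_{<0}$, the class $v$ acquires phase $\phi(v)=1$, every fiber $\mathcal{E}_c$ becomes a $\sigma$-semistable object of phase $1$, and the intersection number becomes
\[
\mathcal{L}_{\sigma}\cdot C=\frac{1}{|Z(v)|}\,\mathfrak{Im}\,Z(W),\qquad W:=\Phi_{\mathcal{E}}(\mathcal{O}_C).
\]
The whole statement then reduces to locating $W$ relative to the slicing $\mathcal{P}$ of $\sigma$.

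The key step, and the one I expect to be the main obstacle, is the structural claim that every Harder--Narasimhan factor of $W$ with respect to $\sigma$ has phase in the closed interval $[0,1]$. I would prove this by showing that $\Phi_{\mathcal{E}}$ carries the heart $\mbox{Coh}(C)$ into the extension closure of $\mathcal{P}(0)$ and $\mathcal{P}((0,1])$; since $\Phi_{\mathcal{E}}$ is exact, $\mbox{Coh}(C)$ is a bounded heart on $D^b(C)$, and $\mathcal{O}_C\in\mbox{Coh}(C)$, this suffices. Working with the heart $\mathcal{A}'=\mathcal{P}((0,1])$, which contains all the fibers $\mathcal{E}_c$, and its cohomology functors $\mathcal{H}^i_{\mathcal{A}'}$, I would establish for every $G\in\mbox{Coh}(C)$ that $\mathcal{H}^i_{\mathcal{A}'}(\Phi_{\mathcal{E}}G)=0$ for $i\neq 0,1$ and, crucially, that $\mathcal{H}^1_{\mathcal{A}'}(\Phi_{\mathcal{E}}G)\in\mathcal{P}(1)$. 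The vanishing outside degrees $0,1$ is the relative analogue of $\dim C=1$ together with the fact that skyscrapers transform to $\mathcal{E}_c\in\mathcal{A}'$; the delicate point is that the top cohomology is forced to be semistable of phase exactly $1$, rather than merely to lie in $\mathcal{A}'$, which is what prevents factors of phase in $(-1,0)$—where the imaginary part would be negative—from occurring. I would extract this from a relative Harder--Narasimhan and Serre-duality argument on the curve: because \emph{all} fibers share the single phase $1$, there is no spreading of phases in the pushforward, and the degree-one cohomology behaves like the $H^1$ of a family of phase-$1$ objects, hence again lies in $\mathcal{P}(1)$.

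Granting the structural claim, nefness is immediate. Writing $W$ as an iterated extension of HN factors $W_i\in\mathcal{P}(\psi_i)$ with $\psi_i\in[0,1]$ and $Z(W_i)=|Z(W_i)|\,e^{\sqrt{-1}\pi\psi_i}$, one has $\mathfrak{Im}\,Z(W)=\sum_i|Z(W_i)|\sin(\pi\psi_i)\geq 0$, since $\sin(\pi\psi_i)\geq 0$ on $[0,1]$. Hence $\mathcal{L}_{\sigma}\cdot C\geq 0$ for every curve $C$, which is the nefness of $\mathcal{L}_{\sigma}$.

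For the equality case I would first read off that $\mathcal{L}_{\sigma}\cdot C=0$ precisely when $\sin(\pi\psi_i)=0$ for every factor, i.e.\ when all HN factors of $W$ have phase $0$ or $1$ and thus sit on the real axis. The remaining task—and the second genuinely substantive point—is to translate this numerical condition into the statement about $S$-equivalence. Here I would argue that the phase-$1$ and phase-$0$ parts of $W$ play the roles of $H^0$ and $H^1$ of the family, and that a jump in the Jordan--H\"older factors of $\mathcal{E}_c$ between two general points of the connected curve $C$ would produce a factor of $W$ of phase strictly between $0$ and $1$, contradicting the vanishing; thus the $S$-equivalence class of $\mathcal{E}_c$ is constant for general $c$. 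Conversely, if the general fibers $\mathcal{E}_c,\mathcal{E}_{c'}$ are $S$-equivalent, then up to $S$-equivalence the family is pulled back from a point and the computation in the constant case gives $\mathcal{L}_{\sigma}\cdot C=0$. This establishes the stated equivalence.
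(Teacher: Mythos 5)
First, a point of order: the paper does not prove this statement at all---it is quoted verbatim from Bayer--Macr\'i \cite{BM} (Lemma 3.3 there) and used as a black box, so there is no internal proof to compare against; your proposal has to be measured against the argument in \cite{BM} itself. Your sketch does reproduce the correct architecture of that argument: pass to the normalization of $C$, rotate so that $Z(v)\in\R_{<0}$ and every fiber $\mathcal{E}_c$ has phase $1$, reduce everything to the position of $W=\Phi_{\mathcal{E}}(\mathcal{O}_C)$ relative to the slicing, show that the HN factors of $W$ have phases in $[0,1]$, and read off nefness from $\sin(\pi\psi_i)\geq 0$. That skeleton is right.

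The gaps are precisely at the two places you yourself flag as substantive. (1) The structural claim $\Phi_{\mathcal{E}}(\mathrm{Coh}(C))\subset\langle\mathcal{P}(0),\mathcal{P}((0,1])\rangle$ is not established by the mechanism you offer. ``Because all fibers share the single phase $1$, there is no spreading of phases in the pushforward'' cannot be the reason: the degree-zero part $\mathcal{H}^0_{\mathcal{A}'}(\Phi_{\mathcal{E}}G)$ \emph{does} spread (it lies only in $\mathcal{P}((0,1])$, not in $\mathcal{P}(1)$), so whatever forces $\mathcal{H}^1_{\mathcal{A}'}(\Phi_{\mathcal{E}}G)$ into $\mathcal{P}(1)$ must distinguish the two cohomologies, and your sketch supplies no such asymmetry. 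The actual proof bounds the phases by Hom-vanishing through the adjoint Fourier--Mukai functors: for $P$ semistable of phase $>1$ (resp.\ $<0$) one computes $\mathrm{Hom}(P,\Phi_{\mathcal{E}}(G))$ (resp.\ $\mathrm{Hom}(\Phi_{\mathcal{E}}(G),P)$) by adjunction on the curve, and the vanishing of $\mathrm{Hom}(P,\mathcal{E}_c[i])$ for every $c$ and the relevant $i$ pins down the cohomological amplitude of the adjoint image of $P$, forcing these Homs to vanish against any coherent sheaf on a curve. A ``relative HN plus Serre duality'' appeal does not substitute for this. (2) The equality case is likewise asserted rather than proved. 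In the forward direction, ``a jump in the JH factors would produce a factor of $W$ of phase strictly between $0$ and $1$'' is the statement to be proved, not a proof; one needs the constancy-on-an-open-set argument for the multiplicity of a fixed stable $F$ in the JH filtration of $\mathcal{E}_c$, extracted from $\mathrm{Hom}(F,\mathcal{E}_c)$ and $\mathrm{Hom}(\mathcal{E}_c,F)$ under the hypothesis that $W$ has only phase-$0$ and phase-$1$ factors. In the converse, generic $S$-equivalence of the fibers does not make the family pulled back from a point, so the reduction to the constant case needs an actual argument (a d\'evissage along the JH filtration together with the fact that $\mathcal{L}_\sigma\cdot C$ depends only on numerical data of the family). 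As it stands, the proposal is an accurate table of contents for the proof in \cite{BM}, but both load-bearing steps are missing.
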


\begin{rem}\label{ample_line_bundles} In the case of the stability conditions $\sigma_t$ corresponding to the vectors $\alpha_t$, we obtain
$$
\mathcal{L}_{\sigma_t}\cdot C=-\frac{1}{\langle v, \alpha_tH'\rangle}\langle \Phi_{\mathcal{E}}(\mathcal{O}_C), \alpha_t\rangle.
$$
On the other hand, by Theorem \ref{main} there exists $t_0$ such that $\sigma_t$-semistability equals $H$-Gieseker semistability for $t>t_0$. Assume that $H$ is not on a wall for Gieseker semistability, then the moduli space $M_H(v)$ consists only of stable sheaves. In this case, the class $\mathcal{L}_{\sigma}$ defines a nef line bundle on the Gieseker moduli space $M_H(v)$ (\cite[Remark 2.6]{BM}). By \cite[Proposition 4.4]{BM} we obtain
$$
\mathcal{L}_{\sigma_t}\equiv \frac{1}{\langle v, \alpha_tH'\rangle}\lambda(-\alpha_t)\in \mbox{Nef}(M_H(v))\ \ \text{for}\ \ t>t_0.
$$
Thus $\mathcal{L}_{\sigma_t}$ and $\mathcal{L}_t$ are on the same ray in $N^1(M_H(v))$. Since the family of line bundles $\{\mathcal{L}_t\}_t$ lie on a line in $N^1(M_H(v))$, and $\mathcal{L}_t$ is ample for $t$ sufficiently large, then the convexity of $\mbox{Nef}(M_H(v))$ implies that $\mathcal{L}_t$ is ample for $t>t_0$.
\end{rem}

\section{Change of the polarization}\label{changeofpolarization}

We will extend Theorem \ref{main} just a little, enough to prove the following

\begin{theorem}\label{main2}
Let $H^+$, $H^-$ be two ample divisors on adjacent chambers with respect to the Gieseker wall and chamber decomposition of $\mbox{Amp}(X)_{\Q}$ for the class $v$. There is a one-parameter family of stability conditions $\{\gamma_t\}_{t\in I}$ and $t_0<t_1<\cdots <t_n$ in $I$ such that the moduli spaces $M_{\gamma_t}(v)$ are isomorphic for all $t\in(t_i,t_{i+1})$. Moreover, each of these moduli spaces is isomorphic to a moduli space of twisted sheaves. For $t<t_0$, $M_{\gamma_t}(v)$ coincides with the moduli space of $H^+$-Gieseker semistable sheaves, and for $t>t_n$ it coincides with the moduli of $H^-$-Gieseker semistable sheaves.
\end{theorem}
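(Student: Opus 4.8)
The plan is to run the two–parameter argument of Corollary \ref{important2} and Remark \ref{general}, but now allowing the ample direction to cross a single slope wall. Let $W_0\subset\mbox{Amp}(X)_\Q$ be the common wall separating the chambers $C^+\ni H^+$ and $C^-\ni H^-$, and write $W_0=\zeta_0^\perp$ for the primitive class $\zeta_0\in N^1(X)$ defining it. First I would choose $H^+$ and $H^-$ generic in their chambers and close enough to $W_0$ that the cone $\{sH^++tH^-\colon s,t\ge 0\}$ meets $W_0$ but no other slope wall for $v$ (possible by local finiteness of slope walls, \cite{MW}), and, after twisting $v$ by a large multiple of an ample class, that \eqref{assumption} holds for $H'=H^-$. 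With these choices I form
$$
\sigma_{s,t}:=\sigma^{H^-}_{\alpha_{s,t}},\qquad \alpha_{s,t}=\left(1,-\tfrac{K_X}{2}+sH^++tH^-,d_{s,t}\right),\quad s,t\ge 0,
$$
where $d_{s,t}$ is chosen so that $\langle v,\alpha_{s,t}\rangle=0$.

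The heart of the proof is to describe the walls in this quadrant. The only subobjects that can produce a wall are the $W_0$-destabilizing subsheaves $A\hookrightarrow E$, i.e.\ those with $\mu_H(A)=\mu_H(E)$ for $H\in W_0$: by the choice of cone these are the only slope-destabilizers met, and only finitely many Chern characters are relevant. For such $A$ the class $\zeta_A=c_1(A)/r(A)-c_1(v)/r(v)$ lies in the line $\R\zeta_0$ orthogonal to $W_0$, so writing $\zeta_A=\lambda_A\zeta_0$ gives
$$
\langle ch(A),\alpha_{s,t}\rangle=r(A)\Big[\tfrac{\chi(A)}{r(A)}-\tfrac{\chi(E)}{r(E)}+\lambda_A\big(s\,\zeta_0\cdot H^++t\,\zeta_0\cdot H^-\big)\Big].
$$
Since $H^+$ and $H^-$ lie on opposite sides of $W_0=\zeta_0^\perp$, the numbers $\zeta_0\cdot H^+$ and $\zeta_0\cdot H^-$ have opposite signs; hence every wall $\ell_A=\{\langle ch(A),\alpha_{s,t}\rangle=0\}$ is a line of positive slope, and all such lines are mutually parallel (common normal proportional to $(\zeta_0\cdot H^+,\zeta_0\cdot H^-)$) and parallel to the ray on which $sH^++tH^-\in W_0$. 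I would prove these are the only walls, and that only finitely many meet the relevant region, by repeating Lemma \ref{important} and Corollary \ref{important2} on each of the two sub-cones lying in $\overline{C^+}$ and $\overline{C^-}$, where $H^-$ (resp.\ $H^+$) lies in the closure of the ambient slope chamber, invoking Maciocia's boundedness (Theorem \ref{maciocia}) on each side.

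Given the wall picture, I would take $\gamma$ to be the restriction of $\sigma_{s,t}$ to the segment $\{s+t=S\}$ for $S$ large, parametrized by $t\in[0,S]$ with $s=S-t$; this segment has slope $-1$, so it is transverse to every $\ell_A$ and crosses them at values $t_0<\cdots<t_n$. For $S$ larger than all the (finitely many) axis-intercepts of the $\ell_A$, the endpoint $(S,0)$ lies beyond every wall in the $C^+$ direction, so by the large-volume limit with the finite bound established above, $M_{\gamma_t}(v)$ is the moduli of $H^+$-Gieseker semistable sheaves for $t<t_0$; symmetrically $(0,S)$ gives $H^-$-Gieseker sheaves for $t>t_n$. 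On each open chamber $(t_i,t_{i+1})$ no wall is crossed, so $M_{\gamma_t}(v)$ is constant; comparing the signs of $\langle ch(A),\alpha_{s,t}\rangle$ over the $W_0$-destabilizing subsheaves with \eqref{eqntwisted} identifies this moduli space with an $L$-twisted Gieseker moduli space polarized by a class $H_0\in W_0$, the twist $L$ being determined by the chamber (the slopes $\mu_{H_0}(A)-\mu_{H_0}(E)$ all vanish on $W_0$, so twisted stability is governed exactly by these signs). Finally, since the $\ell_A$ are indexed by precisely the $W_0$-destabilizing subsheaves, matching them with the destabilizers of \cite{MW} shows that each wall-crossing realizes one Thaddeus flip.

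The \emph{main obstacle} is the wall analysis of the second paragraph. Lemma \ref{important} and Corollary \ref{important2} both use crucially that $H$ and $H'$ share a slope chamber, so that an $H$-Gieseker semistable sheaf remains a genuine subobject and positive-slope walls are excluded. Across $W_0$ a sheaf can be $H^+$-stable but $H^-$-unstable, which is exactly what creates the positive-slope flip walls; the work is to show that these are the only new walls, that they neither accumulate nor escape to infinity, and that the inductive ``move right, then up'' argument still forces any stray wall back into a region controlled by Maciocia's theorem on one of the two sides.
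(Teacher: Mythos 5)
Your overall strategy (a two--parameter slice of stability conditions indexed by divisors, walls that are lines, and a one--parameter sub-family taken far from the origin) is the right one, and your computation that the flip walls coming from $W_0$-destabilizers are finitely many mutually parallel lines of positive slope is correct. But the step you yourself flag as ``the main obstacle'' is a genuine gap, and it cannot be closed simply by ``repeating Lemma \ref{important} and Corollary \ref{important2} on the two sub-cones.'' Every boundedness/wall-structure statement in the paper is proved under the hypothesis that the cone of divisors lies in the closure of a \emph{single} slope chamber and that the heart polarization $H'$ lies in (or generates) that cone: Corollary \ref{important2} rules out positive-slope walls precisely because $H$ and $H'$ share a chamber, and the proof of Theorem \ref{main} bounds walls on the $(s,0)$-ray by sliding each negative-slope wall back to the $(0,t)$-ray, where Maciocia's Theorem \ref{maciocia} applies because that ray points in the direction of the heart polarization. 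In your slice $\{sH^++tH^-\}$ with heart $\mathcal{A}^{H^-}$, (i) the sub-cone on the $C^+$ side does not contain the heart polarization $H^-$, so Corollary \ref{important2} does not apply to it as stated; (ii) Maciocia controls only the $H^-$-axis, and the ``extend the wall back to the controlled axis'' argument now has to pass through the family of positive-slope flip walls, across which a destabilizing sequence need not persist; and (iii) your assertion that the endpoint $(S,0)$ is ``beyond every wall in the $C^+$ direction,'' i.e.\ that $\sigma_{S,0}$-semistability is $H^+$-Gieseker semistability, is exactly an instance of Theorem \ref{main} with $H=H^+$ and $H'=H^-$ in \emph{different} chambers, which is not proved in the paper and is the hard point. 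None of these is obviously false, but each requires a new argument that your proposal does not supply.

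The paper's proof is engineered to avoid all of this: it takes the slice spanned by $H^+$ and a class $H_0$ lying \emph{on} the wall $W_0$, with heart polarization $H^+$. That quadrant sits in $\overline{C^+}$, so Corollary \ref{important2} applies verbatim; your flip walls reappear there as the finitely many \emph{horizontal} walls (slope zero because $H_0$ is on a Gieseker wall), the finiteness argument of Theorem \ref{main} is run strip-by-strip between consecutive horizontal walls, and the one-parameter family $\{\gamma_t\}$ is the vertical ray $\Lambda_{s_0}$ at large $s_0$, whose chambers are the $tH^+$-twisted $H_0$-Gieseker moduli with rational wall values. One then concatenates the $H^+\!\to\!H_0$ family with the symmetric $H^-\!\to\!H_0$ family. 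If you want to salvage your symmetric slice, the cleanest route is to first prove the paper's version and then deduce the wall picture in your quadrant from it; proving the wall picture directly in a cone straddling $W_0$ is strictly harder than what the quoted lemmas give you.
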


\begin{proof}
Let $H_0$ be an ample class on a wall separating the chambers containing $H^+$ and $H^-$. It is enough to find a family of stability conditions reflecting the change of polarization from $H^+$ to $H_0$. Consider the two-dimensional family of stability conditions $\sigma^{H^+}_{s,t}$ given by the vectors $\alpha_{s,t}=(1,-\frac{K}{2}+sH_0+tH^+,d_{s,t})$. By Corollary \ref{important2} we know that in the quadrant $\{\sigma^{H^+}_{s,t}\}_{s,t\geq 0}$ there are horizontal walls (finitely many since there are only finitely many Chern characters of subsheaves destabilizing a Gieseker semistable sheaf \cite[Proposition 1.6]{MW}). The key point in the proof of Theorem \ref{main} that allows us to get only finitely many walls is the absence of horizontal walls, so in the first quadrant above the first horizontal wall there are only finitely many walls and the same remains true between two consecutive horizontal walls. Then by choosing $s_0$ large enough we know that the walls intersecting the ray $\Lambda_{s_0}=\{\sigma^{H^+}_{s_0,t}\colon t\geq 0\}$ are all horizontal. 

Note that if $t$ is a positive rational number then an object is $\sigma^{H^+}_{s_0,t}$-semistable if and only if it is a $tH^+$-twisted $H_0$-Gieseker semistable sheaf, in particular $\sigma^{H^+}_{s_0,t}$-semistable objects have projective moduli \cite{MW}. 

By Corollary \ref{important2} we know that a wall on the ray $\Lambda_{s_0}$ is produced if there is an $H_0$-semistable sheaf of class $v$ that is $H^+$-Gieseker semistable but that fails to be $H_0$-Gieseker semistable, i.e., if there is an inclusion of torsion-free sheaves $A\hookrightarrow E$ with $ch(E)=v$ such that
\begin{align*}
\mu_{H_0}(A)&=\mu_{H_0}(v),\\
\mu_{H^+}(A)&<\mu_{H^+}(v),\  \text{and}\\
\frac{\chi(A)}{r(A)}&>\frac{\chi(v)}{r(v)}.
\end{align*}

This inclusion will produce the wall 
$$
t=-\left(\frac{\chi(A)}{r(A)}-\frac{\chi(v)}{r(v)}\right)\big/ \left(\mu_{H^+}(A)-\mu_{H^+}(v)\right)
$$
which is rational. Then there are $t_0<t_1<\cdots< t_k$ rational numbers corresponding to walls on the ray $\Lambda_{s_0}$ such that for $t>t_k$, $\sigma^{H^+}_{s_0,t}$-semistability coincides with $H^+$-Gieseker semistability.  
\end{proof}

\begin{rem} If we consider the stability conditions of Remark \ref{general}, then the proof of Theorem \ref{main2} and Remark \ref{ample_line_bundles} guarantee the existence of a convex and polyhedral chamber $\mathcal{C}\subset \Delta=\{a_1H_1+\cdots +a_nH_n\colon a_i\geq 0\}$ such that the determinant line bundle associated to a stability condition for a polarization in $\mathcal{C}$ is an ample line bundle on the Gieseker moduli.  
\end{rem}
\begin{figure}[!ht]
\centering
\includegraphics[scale=0.9]{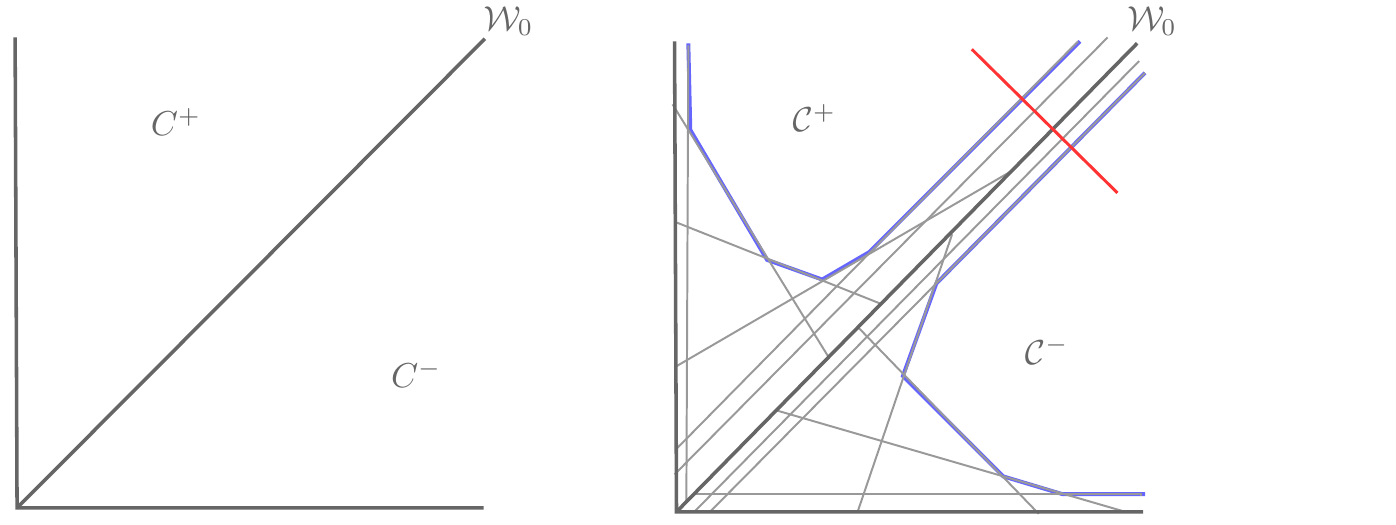}
\caption{Gieseker and Bridgeland chambers.}\label{chambers}
\end{figure}

\begin{rem}\label{chambersexplained}
 Figure \ref{chambers} shows the typical picture of walls for slope stability and our Bridgeland walls. The red line corresponds to the one-dimensional family of stability conditions in Theorem \ref{main2}. The chambers $\mathcal{C}^{\pm}$ are slices of $\mbox{Amp}(M_{C^{\pm}}(v))$. Notice that $\mathcal{W}_0$ may or may not be a Bridgeland wall, it will be if and only if there is an inclusion of sheaves $A\hookrightarrow E$ of the same reduced Hilbert polynomial and with $E$ being $C^+$ or $C^-$-stable of class $v$. If it happens that every $H_0$-Gieseker semistable sheaf is $H_0$-Gieseker stable for $H_0\in \mathcal{W}_0$ then $\mathcal{W}_0$ is not a Bridgeland wall and there are no Bridgeland walls parallel to $\mathcal{W}_0$, thus $\mathcal{C}^+$ and $\mathcal{C}^-$ give a single chamber. If every $C^{\pm}$-semistable sheaf is also $H_0$-Gieseker semistable and some $C^{\pm}$-semistable sheaf is strictly $H_0$-Gieseker semistable then $\mathcal{W}_0$ is a Bridgeland wall and there are no Bridgeland walls parallel to $\mathcal{W}_0$ on the side of $C^{\pm}$.   
\end{rem}

\begin{example}\label{p1p1example}
Let $X=\P^1\times\P^1$ and $v=(2,0,-5)$. The advantage of studying rank-2 sheaves is that computing the walls for slope stability is very simple, but even in this case we can see some of the phenomena described in Remark \ref{chambersexplained} already happening. A wall for slope stability is produced by a short exact sequence
$$
0\rightarrow L\rightarrow E \rightarrow \mathcal{I}_Z(L^{\vee})\rightarrow 0
$$
for some line bundle $L$ and a zero-dimensional subscheme $Z\subset X$ satisfying
$$
L^2+5=\ell(Z).
$$ 

If $H_1=\mathcal{O}(1,0)$ and $H_2=\mathcal{O}(0,1)$, then $L$ should also satisfy that for some integers $a,b\geq 0$
$$
L\cdot (aH_1+bH_2)=0.
$$

\begin{figure}[!ht]
\centering
\includegraphics[scale=0.9]{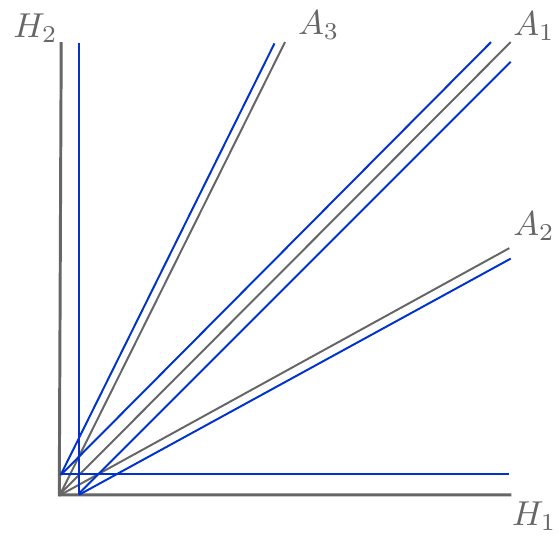}
\caption{Parallel walls for $\P^1\times \P^1$ and $v=(2,0,-5)$.}\label{chambers2}
\end{figure}

The ray generated by $aH_1+bH_2$ is a wall. Thus $L=\pm (aH_1-bH_2)$ and $L^2=-2ab$ which implies $L^2=-4,\ -2$, or $0$. Therefore the walls for slope stability with respect to the class $v$ are given by the polarizations
$A_1=H_1+H_2$, $A_2=2H_1+H_2$, $A_3=H_1+2H_2$, $H_1$, and $H_2$. Since $\chi(E)/r(E)=-3/2$ and $\chi(L)$ is an integer then none of these walls is a Gieseker wall. However, each of the walls for slope stability produces a Bridgeland wall as in Figure \ref{chambers2}. 

\end{example}
\section{Change of polarization for 1-dimensional sheaves}\label{change_for_1-dim}

Gieseker semistability for 1-dimensional sheaves also depends on the polarization but its variation is somehow less studied. As we have seen, in the positive rank case if $H^+$ and $H^-$ are polarizations in adjacent chambers, then there is a rational map
$$
\xymatrix{
M_{H^+}(v)\ar@{-->}[r]& M_{H^-}(v),}
$$
which factors through a finite sequence of Thaddeus flips involving moduli spaces of twisted sheaves. The difference in the 1-dimensional case lies in the fact that the Hilbert polynomial is linear and therefore if $H_0$ lies on a wall separating the chambers containing $H^+$ and $H^-$, respectively, then we obtain a diagram 
\begin{equation}\label{1_dimensional_gieseker_wall_crossing}
\xymatrix{
M_{H^+}(v)\ar@{-->}[rr]\ar[dr] & & M_{H^-}(v),\ar[dl]\\
& M_{H_0}(v) &
}
\end{equation}
where the morphisms $M_{H^{\pm}}(v)\rightarrow M_{H_0}(v)$ send a 1-dimensional $H^{\pm}$-Gieseker semistable sheaf to its $S$-equivalence class with respect to $H_0$-Gieseker semistability. The local behavior of the morphisms $M_{H^{\pm}}(v)\rightarrow M_{H_0}(v)$ at a strictly $H_0$-Gieseker semistable sheaf has been recently studied by Arbarello and Sacc\`a \cite{AS15}, in the case of K3 surfaces. 

In this section, we produce a family of stability conditions realizing the change of polarization for 1-dimensional sheaves, i.e., we obtain all the diagrams \eqref{1_dimensional_gieseker_wall_crossing} as Bridgeland wall-crossings. In fact, for our family of stability conditions there is a one-to-one correspondence between Bridgeland walls and Gieseker walls. 

As always, fix an ample class $H\in \mbox{Amp}(X)$.

\begin{definition}
A pure 1-dimensional torsion sheaf $\mathcal{F}$ is $H$-Gieseker (semi)stable if for all subsheaves $A\hookrightarrow \mathcal{F}$ one has
\begin{equation}\label{1_dim_gieseker_stab}
\frac{\chi(A)}{c_1(A)\cdot H}\semi \frac{\chi(\mathcal{F})}{c_1(\mathcal{F})\cdot H}.
\end{equation}
\end{definition}

If we try to write the inequality \eqref{1_dim_gieseker_stab} as a condition on the sign of the pairing of $ch(A)$ with a class $\beta\in \widetilde{\mbox{NS}}(X)$, depending on the class $v=ch(\mathcal{F})$, we obtain the equivalent 
\begin{definition}
A pure 1-dimensional torsion sheaf $\mathcal{F}$ of class $v=(0,C,\frac{KC}{2}+\chi)$ is $H$-Gieseker semistable if for all subsheaves $A\hookrightarrow \mathcal{F}$ one has
\begin{equation}
\langle ch(A), \beta^{H}_{\tau} \rangle \leq 0,
\end{equation}
where 
$$
\beta^H_{\tau}=\left(1,-\frac{K_X}{2}-\frac{\chi}{CH}H,-\tau\right).
$$
\end{definition} 
We denote the moduli space parametrizing $S$-equivalence classes of 1-dimensional $H$-Gieseker semistable sheaves of class $v$ by $N_H([C],\chi)$.
  
Notice that since every subsheaf of a pure 1-dimensional sheaf is again pure and 1-dimensional, then any choice of $\tau$ will give the inequality \eqref{1_dim_gieseker_stab}. However, we will require $\tau$ to be positive and large enough in order for $\beta^H_{\tau}$ to satisfy the Bogomolov inequality and give a stability condition. 

\begin{prop}\label{walls_1_dim}
For $\tau>0$ and large enough, and any ample class $H'\in\mbox{Amp}(X)$, the vector $\beta^H_{\tau}$ gives a stability condition $\sigma^{H'}_{\tau}\in\mbox{Stab}(X)$. For $\tau\gg 0$ the only $\sigma^{H'}_{\tau}$-semistable objects of class $v$ are the 1-dimensional $H$-Gieseker semistable sheaves. Moreover, the walls in $\mbox{Stab}(X)$ of type $v$ intersecting the ray $\{\sigma^{H'}_{\tau}\}_{\tau>0}$ are bounded above.
\end{prop}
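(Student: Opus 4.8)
The plan is to dispatch the three assertions in turn, in each case reducing to the machinery already developed for positive rank. For the first assertion I would simply invoke the Bogomolov-type criterion of Section \ref{definitions}: since $\beta^H_\tau=(1,b_1,-\tau)$ with the fixed divisor $b_1=-\frac{K_X}{2}-\frac{\chi}{CH}H$, the inequality $b_1^2>2\cdot 1\cdot(-\tau)=-2\tau$ holds for every $\tau>-b_1^2/2$, so $\beta^H_\tau$ satisfies the Bogomolov inequality and defines $\sigma^{H'}_\tau=\sigma^{H'}_{\beta^H_\tau}$ for every ample $H'$ once $\tau$ is large. I would then record the central-charge computation that makes $v$ vertical: a direct calculation gives $\langle v,\beta^H_\tau\rangle=0$ and $\langle v,\beta^H_\tau H'\rangle=C\cdot H'>0$, so $Z^{H'}_\tau(v)\in\sqrt{-1}\,\R_{>0}$, and exactly as in \eqref{bridgelandcondition} an object $E$ of class $v$ in the heart is $\sigma^{H'}_\tau$-semistable if and only if $\langle ch(A),\beta^H_\tau\rangle\le 0$ for every subobject $A\hookrightarrow E$. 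The key point, already implicit in the defining inequality, is that for an honest subsheaf $A$ the pairing $\langle ch(A),\beta^H_\tau\rangle=\chi(A)-\tfrac{\chi(v)}{C\cdot H}(c_1(A)\cdot H)$ is independent of $\tau$ and of $H'$, and is $\le 0$ precisely when $A$ does not violate \eqref{1_dim_gieseker_stab}.

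Next I would analyze the subobjects structurally, using that $v$ has rank $0$. For a sheaf $E$ in the torsion class, any subobject $A$ in the heart satisfies $\mathcal{H}^{-1}(A)\hookrightarrow\mathcal{H}^{-1}(E)=0$, so $A$ is a sheaf of rank $\ge 0$; and if $r(A)=0$ then the kernel of $A\to E$ is a torsion-free subsheaf of a torsion sheaf, hence zero, so $A$ is a genuine subsheaf. Two consequences follow. First, a nonzero $\mathcal{H}^{-1}(E)[1]$ (of some rank $r>0$) has $\langle ch(\mathcal{H}^{-1}(E)[1]),\beta^H_\tau\rangle=r\tau-(\text{a constant independent of }\tau)>0$ for $\tau$ large, so a semistable $E$ must be a sheaf; and a $0$-dimensional subsheaf $T$ has $\langle ch(T),\beta^H_\tau\rangle=\ell(T)>0$ for every $\tau$, so $E$ must be pure $1$-dimensional. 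Second, since rank-$0$ subobjects are subsheaves and their pairing with $\beta^H_\tau$ is $\tau$-independent, they can never cross from non-destabilizing to destabilizing; hence every genuine wall on the ray is produced by a positive-rank subobject $A$, occurring at $\tau_A=(c_1(A)\cdot b_1+ch_2(A))/r(A)$.

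Finally, the boundedness of these walls is the crux, and I expect it to be the main obstacle, for the same reason as in the positive-rank case: the categories $\mathcal{A}^{H'}_{\beta^H_\tau}$ vary with $\tau$, so the positive-rank destabilizers do not a priori form a bounded family. I would overcome this exactly as before. When $H'=H$ the ray $\{\beta^H_\tau\}$ embeds in one of Maciocia's half-planes $\Pi_{u_0}$ of Theorem \ref{maciocia} as the vertical line of fixed $x$ with $y=\sqrt{(\beta_x^2+2\tau)/H^2}\to\infty$ (taking $\beta_x=\frac{K_X}{2}+\frac{\chi}{CH}H$); since the walls there are semicircles of bounded center and radius, the vertical ray meets only finitely many of them and $\tau$ is bounded above. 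For general $H'$ I would run the two-dimensional-slice argument of Lemma \ref{important} and Corollary \ref{important2}, whose monotonicity (once semistable on the ray, stays semistable as one moves up) forces every wall in the $(s,t)$-quadrant to be a line of nonpositive slope, so the finitely many walls crossing the $H$-ray also bound the $H'$-ray. Granting a bound $\tau_0$, the semistable locus is constant for $\tau>\tau_0$; combining the sheaf/purity argument above with the $\tau$-independent subsheaf computation then identifies it with the pure $1$-dimensional $H$-Gieseker semistable sheaves, which settles the second assertion and completes the proof.
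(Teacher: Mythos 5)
Your proposal is correct and follows essentially the same route as the paper: Bogomolov for large $\tau$ to get the stability condition, the observation that rank-zero subobjects are genuine subsheaves with $\tau$-independent pairing while positive-rank (resp.\ negative-rank) subobjects have pairing tending to $-\infty$ (resp.\ $+\infty$) to identify the large-volume chamber, and Theorem \ref{maciocia} for boundedness. The only divergence is your proposed fallback to the two-dimensional-slice argument for general $H'$, which is unnecessary: since the degree-one component of $\beta^H_\tau$ is constant in $\tau$, the ray embeds as a vertical line in Maciocia's half-plane for \emph{any} ample class $H'$ (decomposing $\frac{K_X}{2}+\frac{\chi}{CH}H$ along $H'$ and its orthogonal complement), which is exactly the paper's one-line justification.
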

\begin{proof}
If we choose $\tau$ such that 
$$
\left(-\frac{K_X}{2}-\frac{\chi}{CH}H\right)^2>-2\tau,
$$
then \eqref{central_charge} is the charge of a stability condition for any $H'\in\mbox{Amp}(X)$. The boundedness result follows from Theorem \ref{maciocia} since the degree 1 component of $\beta^H_{\tau}$ is constant. 

To see that in the last chamber we obtain only the 1-dimensional $H$-Gieseker semistable sheaves notice that if $A\hookrightarrow E$ is an inclusion in $\mathcal{A}^{H'}_{\beta^H_{\tau}}$  with $ch(E)=v$ and $ch_0(A)<0$, then for $\tau$ sufficiently large 
$$
\langle ch(A), \beta^H_{\tau}\rangle >0.
$$  

This forces any $\sigma^{H'}_{\tau}$-semistable object $E$ in the last chamber ($\tau\gg 0$) to be a sheaf since otherwise $\mathcal{H}^{-1}(E)[1]$ would destabilize $E$. Since $\beta^H_{\tau}$ was obtained directly from inequality \eqref{1_dim_gieseker_stab} then $E$ is $H$-Gieseker semistable. On the other hand, if $\mathcal{F}$ is a 1-dimensional $H$-Gieseker semistable sheaf with $ch(\mathcal{F})=v$ then $\mathcal{F}\in\mathcal{A}^{H'}_{\beta^H_\tau}$ and it is $\sigma^{H'}_{\tau}$-semistable for $\tau$ in the last chamber since for such $\tau$ no subobject of positive rank can destabilize $\mathcal{F}$. 
\end{proof}
It is convenient to have a version of Lemma \ref{important}.

\begin{lemma}\label{important_1_dim}
If $\mathcal{F}$ is a 1-dimensional $H$-Gieseker semistable sheaf with $ch(\mathcal{F})=v$ that is $\sigma^{H'}_{\tau_0}$-semistable for some $\tau_0>0$, then $\mathcal{F}$ is $\sigma^{H'}_{\tau}$-semistable for each $\tau\geq \tau_0$.  
\end{lemma}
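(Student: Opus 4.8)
The plan is to mirror the proof of Lemma \ref{important}, but to exploit a structural simplification that is special to the $1$-dimensional setting: here the heart does \emph{not} move as the parameter varies. Indeed, the degree-$1$ component of $\beta^H_\tau=\left(1,-\frac{K_X}{2}-\frac{\chi}{CH}H,-\tau\right)$ is independent of $\tau$, and only the degree-$2$ component changes. Since the torsion pair defining $\mathcal{A}^{H'}_{\beta^H_\tau}$ is governed by the sign of $\langle ch(\cdot),\beta^H_\tau\cdot H'\rangle$, I would first compute $\beta^H_\tau\cdot H'=(0,H',DH')$ with $D=-\frac{K_X}{2}-\frac{\chi}{CH}H$, which is constant in $\tau$. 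Hence the heart $\mathcal{A}:=\mathcal{A}^{H'}_{\beta^H_\tau}$ and its collection of subobjects are the same for every $\tau>0$, in contrast with Lemma \ref{important} where the categories $\mathcal{A}^{H'}_t$ genuinely varied. This is what makes the present statement essentially automatic once set up correctly.

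Next I would analyze an arbitrary subobject $A\hookrightarrow \mathcal{F}$ in $\mathcal{A}$, with quotient $B$, via the long exact sequence of cohomology sheaves. Since $\mathcal{F}$ is a sheaf, $\mathcal{H}^{-1}(\mathcal{F})=0$, so $\mathcal{H}^{-1}(A)\hookrightarrow \mathcal{H}^{-1}(\mathcal{F})=0$ forces $A$ to be a sheaf lying in $\mathcal{Q}^{H'}$, and the sequence reduces to $0\to \mathcal{H}^{-1}(B)\to A\to \mathcal{F}\to \mathcal{H}^0(B)\to 0$. Thus the kernel of $A\to\mathcal{F}$ is the torsion-free sheaf $\mathcal{H}^{-1}(B)\in\mathcal{F}^{H'}$, while the image of $A$ in $\mathcal{F}$ is a subsheaf of a pure $1$-dimensional sheaf, hence itself torsion of rank $0$. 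Consequently $r(A)=r(\mathcal{H}^{-1}(B))\geq 0$ for every subobject $A$ of $\mathcal{F}$ in $\mathcal{A}$.

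With this in hand the conclusion follows from monotonicity of the pairing in $\tau$. Using the Poincar\'e pairing I would write $\langle ch(A),\beta^H_\tau\rangle=-r(A)\tau+c_1(A)\cdot D+ch_2(A)$, so that $\langle ch(A),\beta^H_\tau\rangle=\langle ch(A),\beta^H_{\tau_0}\rangle-r(A)(\tau-\tau_0)$, an affine function of $\tau$ with slope $-r(A)\leq 0$. Since $\mathcal{F}$ is $\sigma^{H'}_{\tau_0}$-semistable we have $\langle ch(A),\beta^H_{\tau_0}\rangle\leq 0$, and because $r(A)\geq 0$ and $\tau\geq \tau_0$ the correction term is nonpositive; therefore $\langle ch(A),\beta^H_\tau\rangle\leq 0$ for all $\tau\geq\tau_0$. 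As $A$ was an arbitrary subobject, $\mathcal{F}$ is $\sigma^{H'}_\tau$-semistable for every $\tau\geq\tau_0$.

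The genuinely delicate points are the two structural claims rather than any estimate: that the heart is constant in $\tau$ (the computation $\beta^H_\tau\cdot H'=(0,H',DH')$), and that every subobject of $\mathcal{F}$ in $\mathcal{A}$ has nonnegative rank (the cohomology long exact sequence together with purity of $\mathcal{F}$). I expect the rank inequality to be the main thing to state carefully, since it is exactly the feature that forces $-r(A)\leq 0$ and hence makes destabilization impossible as $\tau$ grows; the remaining monotonicity argument is then immediate and avoids the rank-induction needed in Lemma \ref{important}.
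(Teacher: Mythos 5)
Your proposal is correct and follows essentially the same route as the paper: both arguments rest on the observation that the heart $\mathcal{A}^{H'}_{\beta^H_\tau}$ is independent of $\tau$ and that the pairing $\langle ch(A),\beta^H_\tau\rangle=-r(A)\tau+\mathrm{const}$ is non-increasing in $\tau$ because any subobject of the sheaf $\mathcal{F}$ has nonnegative rank. The paper phrases this as a contradiction (a destabilizer at $\tau_1>\tau_0$ cannot be a subsheaf by Gieseker semistability, hence has positive rank and already destabilizes at $\tau_0$), while you argue directly via monotonicity; the difference is purely organizational.
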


\begin{proof}
First, notice that the category $\mathcal{A}^{H'}_{\tau}$ does not depend on $\tau$. If $A\hookrightarrow \mathcal{F}$ is an inclusion in $\mathcal{A}^{H'}$ destabilizing $\mathcal{F}$ for some $\tau_1>\tau_0$ then $A$ can not be a subsheaf and so 
$$
\langle ch(A), \beta^H_{\tau}\rangle>0
$$ 
for every $\tau<\tau_1$, in particular $A$ would destabilize $E$ at $\tau_0$, a contradiction.
\end{proof}

If we want to study change of polarization for 1-dimensional sheaves, we need to introduce stability conditions that take into account Gieseker slopes for different values of $H$. At first glance, it seems like using the stability conditions $\sigma^{H'}_{\tau}$ is not the right choice because of the term $CH$ appearing as a denominator in $\beta^H_{\tau}$. However, a change of coordinates will allow us to stack all these stability conditions together as $H$ moves in the ample cone of $X$.

Since the functor $\mathcal{F}\mapsto \mathscr{E}xt^1(\mathcal{F}, \omega_X)$ induces an isomorphism between $N_{H}([C],\chi)$ and $N_{H}([C],-\chi)$ (see \cite[Corollary 3.3]{Mar}), then we can assume $\chi\geq 0$. Moreover, since $H$-Gieseker semistability for 1-dimensional sheaves of Euler-Poincar\'e characteristic $\chi=0$ is independent of the polarization, then we can assume $\chi>0$. 

Let $H,H'\in \mbox{Amp}(X)$ and assume for simplicity that $CH=CH'$. Let $D=s_0H'+t_0H$ for some $s_0,t_0\geq 0$ with $(s_0,t_0)\neq (0,0)$. Using the change of coordinates   
$$
\tau=-\frac{K_X^2}{8}-\frac{\chi}{2}\frac{DK_X}{DC}+\frac{\chi}{\lambda DC}
$$ 
we obtain the vectors
\begin{align*}
\alpha_{\lambda D} :=&\frac{(\lambda D)C}{\chi}\beta^D_{\tau}\\
=& \left(\frac{\lambda DC}{\chi}, -\frac{\lambda D C}{\chi}\frac{K_X}{2}-\lambda D, -\frac{\lambda DC}{\chi}\frac{K_X^2}{8}+\frac{\lambda DK}{2}-1\right).
\end{align*}

We can then consider $s$ and $t$ as parameters for the stability conditions given by the vectors $\alpha_{s,t}:=\alpha_{D_{s,t}}$ with $D_{s,t}=sH'+tH$. A simple computation shows that $\alpha_{s,t}$ satisfies the Bogomolov inequality for every $(s,t)\in \R_{\geq 0}\times \R_{\geq 0}\setminus \{(0,0)\}$. 

Now, since $\lambda\rightarrow 0$ as $\tau \rightarrow +\infty$ then we know that along every line $\{(s,t)=\lambda(s_0,t_0)\}_{\lambda>0}$ with $(s_0,t_0)\in\R_{\geq 0}\times \R_{\geq 0}\setminus \{(0,0)\}$, the only $\sigma^{H'}_{s,t}$-semistable objects of class $v$  for $0<\lambda \ll 1$ are the 1-dimensional  $D_{s_0,t_0}$-Gieseker semistable sheaves. Moreover, by Proposition \ref{walls_1_dim} along every such line the walls are bounded below.

To obtain a region where the change of polarization is realized, we need to study the configuration of walls in $\R_{\geq 0}\times \R_{\geq 0}$.

\begin{theorem}\label{main_1_dim}
If an inclusion $A\hookrightarrow E$ in $\mathcal{A}^{H'}_{s_0,t_0}$ produces a wall destabilizing a 1-dimensional sheaf $E$ of class $v$, then $A$ destabilizes $E$ for all $\sigma^{H'}_{s,t}$ with
$$
(0,0)\neq (s,t)\in\mathcal{W}_{ch(A)}=\{(s,t)\in \R_{\geq 0}\times \R_{\geq 0}\colon \langle ch(A), \alpha_{s,t} \rangle=0\}.
$$
Moreover, if $\mathcal{W}_{ch(A)}$ is a line passing through the origin, then $A$ must be a subsheaf destabilizing $E$ with respect to Gieseker semistability. 
\end{theorem}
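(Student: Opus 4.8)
The plan is to reduce everything to the linearity of the pairing $\langle ch(A),\alpha_{s,t}\rangle$ and then to analyze when $A$ remains a genuine subobject as the heart $\mathcal{A}^{H'}_{s,t}$ varies. First I would compute $\langle ch(A),\alpha_{s,t}\rangle$ explicitly. Since the degree-$0$ and degree-$1$ components of $\alpha_{s,t}=\alpha_{D_{s,t}}$ are homogeneous linear in $D_{s,t}=sH'+tH$ while the degree-$2$ component is linear in $D_{s,t}$ plus the constant $-1$, the pairing has the shape $\langle ch(A),\alpha_{s,t}\rangle=\ell(s,t)-r(A)$ with $\ell$ a homogeneous linear functional. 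Hence $\mathcal{W}_{ch(A)}$ is the first-quadrant part of the line $\ell(s,t)=r(A)$, which passes through the origin if and only if $r(A)=0$. When $r(A)=0$ the subobject $A$ is torsion, so the long exact sequence forces $\mathcal{H}^{-1}$ of the quotient to vanish and $A\hookrightarrow E$ is a genuine subsheaf; being torsion, $A$ and $E/A$ lie in the heart for every $(s,t)$, so $A$ destabilizes along the whole ray, and rewriting $\langle ch(A),\alpha_{s,t}\rangle=\tfrac{D_{s,t}C}{\chi}\langle ch(A),\beta^{D_{s,t}}_{\tau}\rangle=0$ through \eqref{1_dim_gieseker_stab} identifies $A$ as a Gieseker-destabilizing subsheaf. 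This settles the ``moreover'' clause.

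For $r(A)>0$ I would take the cohomology long exact sequence of $0\to A\to E\to B\to 0$ in $\mathcal{A}^{H'}_{s_0,t_0}$. As $E$ is a sheaf this gives $\mathcal{H}^{-1}(A)=0$, so $A$ is a sheaf, $M:=\mathcal{H}^{-1}(B)=\ker(A\to E)$ is torsion-free, and $\mathcal{H}^0(B)=E/\mathrm{im}(A\to E)$ is torsion. Since $E$ and $\mathcal{H}^0(B)$ are torsion they automatically lie in the heart for all $(s,t)$, so ``$A$ is a subobject destabilizing $E$'' reduces, on the wall, to the two tilting conditions $A\in\mathcal{Q}_{s,t}$ and $M\in\mathcal{F}_{s,t}$. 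A direct computation of $\langle ch(F),\alpha_{s,t}H'\rangle$ shows that for torsion-free $F$ one has $\langle ch(F),\alpha_{s,t}H'\rangle\leq 0\Leftrightarrow \mu_{H'}(F)\leq \Theta(s,t)$, where $\Theta(s,t)=\tfrac{K H'}{2}+\chi\tfrac{D_{s,t}H'}{D_{s,t}C}$. The two conditions therefore become $\mu^{\min}_{H'}(A)>\Theta(s,t)$ and $\mu^{\max}_{H'}(M)\leq\Theta(s,t)$, that is, $\Theta(s,t)\in[\mu^{\max}_{H'}(M),\mu^{\min}_{H'}(A))$, an interval that is nonempty because it contains $\Theta(s_0,t_0)$.

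The structural observation that makes persistence plausible is that $\Theta$ depends only on the direction $(s:t)$: writing $\rho=s/(s+t)$ and using $CH=CH'$, $\Theta$ is affine and monotone in $\rho$, interpolating between its values at the pure directions $H'$ and $H$. Hence the heart $\mathcal{A}^{H'}_{s,t}$ is constant along each ray from the origin, and ``$A$ is a destabilizing subobject'' is a property of the direction alone, holding on the sub-arc of directions where $\Theta\in[\mu^{\max}_{H'}(M),\mu^{\min}_{H'}(A))$; by monotonicity this locus is a single sub-interval of directions containing $(s_0:t_0)$. It then remains to show this sub-interval covers \emph{every} direction met by $\mathcal{W}_{ch(A)}$ inside the quadrant. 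I would do this by excluding a ``transition'' point on the wall, i.e. a direction where $\Theta$ reaches $\mu^{\min}_{H'}(A)$ (so the minimal-slope quotient of $A$ acquires zero imaginary central charge and $A$ drops out of $\mathcal{Q}_{s,t}$) or $\mu^{\max}_{H'}(M)$: moving toward the origin along the ray through such a point, where the heart is unchanged, Lemma \ref{important_1_dim} together with the boundedness of walls along rays in Proposition \ref{walls_1_dim} should force $E$ to remain semistable with $A$ still among its Jordan--Hölder factors, pinning $\Theta$ inside the interval up to the boundary directions $H,H'$.

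The main obstacle is exactly this last step: controlling category membership as the tilt $\mathcal{A}^{H'}_{s,t}$ moves with the direction, equivalently proving that $\Theta$ cannot escape $[\mu^{\max}_{H'}(M),\mu^{\min}_{H'}(A))$ before the numerical wall exits the first quadrant. The linear algebra of the first two steps is routine; all the content lies in the interplay between the moving threshold $\Theta$ and the fixed slopes $\mu^{\min}_{H'}(A),\mu^{\max}_{H'}(M)$, which is precisely the phenomenon that Lemma \ref{important_1_dim} governs along rays from the origin.
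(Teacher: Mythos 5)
Your handling of the numerical half of the statement is correct and coincides with the paper's: the pairing $\langle ch(A),\alpha_{s,t}\rangle$ is affine in $(s,t)$ with constant term a nonzero multiple of $r(A)=r(\mathcal{H}^{-1}(B))$, so $\mathcal{W}_{ch(A)}$ passes through the origin exactly when $A$ is a genuine (necessarily torsion) subsheaf of $E$, and in that case the wall equation collapses to the Gieseker wall condition $\chi(A)/(c_1(A)\cdot D_{s,t})=\chi/(C\cdot D_{s,t})$ for the polarization $D_{s,t}$. That settles the ``moreover'' clause, and your reduction of the first clause to the two membership conditions $A\in\mathcal{Q}_{s,t}$ and $\mathcal{H}^{-1}(B)\in\mathcal{F}_{s,t}$, together with the observation that the heart depends only on the direction $(s:t)$ through the threshold $\Theta$, is also correct and is implicit in the paper.

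The gap is in the first (and main) assertion: you never show that $\Theta$ stays in the interval $[\mu^{\max}_{H'}(\mathcal{H}^{-1}(B)),\mu^{\min}_{H'}(A))$ as $(s,t)$ traverses all of $\mathcal{W}_{ch(A)}\cap(\R_{\geq 0}\times\R_{\geq 0})$, and your closing appeal to Lemma \ref{important_1_dim} is not yet an argument: that lemma presupposes that $E$ is Gieseker semistable for the polarization in the direction of the chosen ray (not known a priori at an arbitrary point of $\mathcal{W}_{ch(A)}$) and its conclusion is only the semistability of $E$, not the survival of the particular subobject $A$ or its appearance among the Jordan--H\"older factors. The paper closes this step differently: if exactness fails at some $(s_1,t_1)\in\mathcal{W}_{ch(A)}$, then the extremal $H'$-semistable Harder--Narasimhan factor responsible pairs to zero with $\alpha_{s_1,t_1}H'$, and replacing $A$ by the corresponding kernel or image (exactly as in Lemma \ref{important} and the proof of Theorem \ref{main}) produces a second \emph{actual} wall for $E$ through $(s_1,t_1)$; if this is a different line it has a different slope, and two actual walls of different slopes meeting in the quadrant are excluded by Lemma \ref{important_1_dim} via the ``move toward the origin along the ray through the crossing point, then back out'' argument of Corollary \ref{important2} (if it is the same line one continues by induction on the rank). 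This is the piece you must supply; with it, your proposal becomes essentially the paper's proof.
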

\begin{proof}
The argument is very similar to the one used in the proof of Corollary \ref{important2}. Suppose that 
$$
0\rightarrow A\rightarrow E\rightarrow B\rightarrow 0
$$
is a short exact sequence in $\mathcal{A}^{H'}_{s_0,t_0}$ producing the wall $\mathcal{W}$. This sequence should remain exact for all $(s,t)\in\mathcal{W}$ in a neighborhood of $(s_0,t_0)$, the only way this is not the case is if the first $H'$-semistable factor of $\mathcal{H}^{-1}(B)$ pairs to zero with $\alpha_{s_0,t_0}H'$, which violates the semistability of $B$ at $(s_0,t_0)$. 

If this sequence is not exact for all $(s,t)\in\mathcal{W}$, then $\mathcal{W}$ should intersect another wall for $E$ that is a line of different slope in $\R_{\geq 0}\times \R_{\geq 0}$. This contradicts Lemma \ref{important_1_dim}. 

Now, if $ch(A)=(r',c',ch'_2)$ then the equation for $\mathcal{W}$ is
\begin{align*}
&s\left(\chi(A)-r'\left(\chi(\mathcal{O}_X)+\frac{K_X^2}{8}\right)-\frac{\chi}{CH}c'H'\right)\\
+& t\left(\chi(A)-r'\left(\chi(\mathcal{O}_X)+\frac{K_X^2}{8}\right)-\frac{\chi}{CH}c'H\right)-r'\frac{\chi}{CH}=0,
\end{align*}
which shows that $\mathcal{W}$ passes through $(0,0)$ if and only if $A$ is a subsheaf of $E$. If that is the case, the equation for $\mathcal{W}$ becomes
$$
c'(sH'+tH)\left(\frac{\chi(A)}{c'(sH'+tH)}-\frac{\chi}{C(sH'+tH)}\right)=0,
$$
which means that $sH'+tH$ is on a wall for Gieseker semistability for the class $v$.
\end{proof}
\begin{corollary}
Let $H,\ H'\in\mbox{Amp}(X)$ and fix a class $v=(0,C,\frac{KC}{2}+\chi)\in\widetilde{\mbox{NS}}(X)$ with $\chi>0$. Then there is a one-parameter family of stability conditions $\{\sigma_t\}_{t\in I}$ and $t_0<t_1<\cdots <t_n$ in $I$ such that the moduli spaces $M_{\sigma_t}(v)$ are isomorphic for all $t\in(t_i,t_{i+1})$, each of these moduli spaces is isomorphic to a moduli space of 1-dimensional Gieseker semistable sheaves for some polarization in the segment $S=\{aH'+(1-a)H\colon a\in[0,1]\}$. For $t<t_0$ $M_{\sigma_t}(v)$ coincides with the moduli space $N_{H}([C],\chi)$, and for $t>t_n$ it coincides with $N_{H'}([C],\chi)$. Moreover, there is a one-to-one correspondence between Bridgeland walls for the class $v$ intersecting the segment $\{\sigma_t\}_{t\in I}$ and walls for Gieseker semistability intersecting the segment $S$. 
\end{corollary}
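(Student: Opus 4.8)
The plan is to assemble the one-parameter family $\{\sigma_t\}$ by traversing the two-dimensional region $\R_{\geq 0}\times\R_{\geq 0}$ along a carefully chosen path, invoking the wall-structure theorem just proven. First I would recall the geometric picture established in Theorem \ref{main_1_dim}: every wall $\mathcal{W}_{ch(A)}$ in the $(s,t)$-quadrant is (a portion of) a line, and such a wall passes through the origin precisely when $A$ is a subsheaf of $E$, in which case the wall corresponds exactly to the locus where $sH'+tH$ lies on a Gieseker wall for the class $v$. The walls that do \emph{not} pass through the origin correspond to inclusions $A\hookrightarrow E$ in the tilted heart with $ch_0(A)\neq 0$; by Proposition \ref{walls_1_dim} these are bounded (bounded above along rays through the origin, hence confined to a bounded region of the quadrant).

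The key idea is to pick a point $(s_\ast,t_\ast)$ far enough from the origin that it lies beyond all the non-origin walls, and then to sweep an arc (or a polygonal path) that stays at this large radius while sweeping the angular coordinate from the positive $t$-axis (the direction $H$) to the positive $s$-axis (the direction $H'$). First I would argue, using the boundedness from Proposition \ref{walls_1_dim}, that only finitely many walls through the origin meet this arc, and that along the arc — at the chosen large radius — no non-origin wall is crossed. By Lemma \ref{important_1_dim}, moving outward along any ray away from the origin never destabilizes a Gieseker semistable sheaf, so at radius $(s_\ast,t_\ast)$ the $\sigma_{s,t}$-semistable objects of class $v$ are exactly the $D_{s,t}$-Gieseker semistable sheaves, where $D_{s,t}=sH'+tH$ points in the direction $aH'+(1-a)H$ for the appropriate $a\in[0,1]$. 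Reparametrizing the arc by a single parameter $t\in I$ gives the family $\{\sigma_t\}$; the finitely many origin-walls that the arc crosses occur at parameter values $t_0<t_1<\cdots<t_n$, and by Theorem \ref{main_1_dim} these are exactly the parameters where $D_{s,t}$ sits on a Gieseker wall in the segment $S$.

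With this setup the remaining assertions follow almost formally. On each open subinterval $(t_i,t_{i+1})$ the path crosses no wall, so $M_{\sigma_t}(v)$ is constant there and, by the identification at large radius, isomorphic to $N_{D_{s,t}}([C],\chi)$ for the corresponding polarization in $S$. The endpoint behavior comes from taking the arc to begin near the $H$-axis and end near the $H'$-axis: combined with Proposition \ref{walls_1_dim} (the last chamber yields precisely the $H$-Gieseker, resp.\ $H'$-Gieseker, semistable sheaves), this gives $M_{\sigma_t}(v)=N_H([C],\chi)$ for $t<t_0$ and $N_{H'}([C],\chi)$ for $t>t_n$. The one-to-one correspondence between Bridgeland walls met by $\{\sigma_t\}$ and Gieseker walls met by $S$ is immediate from the equivalence in Theorem \ref{main_1_dim}: each crossing of an origin-wall is precisely a crossing of a Gieseker wall by $D_{s,t}$, and conversely.

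The main obstacle I anticipate is justifying that a single arc at fixed large radius genuinely avoids \emph{all} non-origin walls simultaneously and crosses each origin-wall transversally and exactly once. The boundedness in Proposition \ref{walls_1_dim} is stated along rays, so I would need to promote it to a uniform bound over the angular range $[0,\pi/2]$ of directions interpolating $H$ and $H'$; since the relevant destabilizing Chern characters form a locally finite (indeed finite, after restricting to the compact angular interval and using the finiteness of Gieseker walls in $S$ from \cite{MW}) set, the supremum of the wall radii over this finite collection is finite, so a large enough radius works. A secondary subtlety is ensuring the distinct origin-walls meet the arc at distinct parameter values and that no two origin-walls coincide over the interval $S$; this is handled by the finiteness of Gieseker walls in a finitely generated subcone together with the genericity of the chosen radius, and by Theorem \ref{main_1_dim} guaranteeing that each origin-wall is a genuine line of a well-defined slope.
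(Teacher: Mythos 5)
Your construction is built on an inverted reading of the coordinates, and this breaks the argument. Proposition \ref{walls_1_dim} bounds the walls \emph{above in the parameter $\tau$}, and the change of variables has $\lambda\to 0$ as $\tau\to+\infty$; hence in the $(s,t)$-quadrant the ``large volume'' region where $\sigma_{s,t}$-semistability coincides with $D_{s,t}$-Gieseker semistability is the region \emph{near the origin} ($0<\lambda\ll 1$), and the walls along each ray are bounded \emph{below} in $\lambda$, i.e.\ bounded away from the origin, with no upper radial bound asserted anywhere in the paper. Your arc ``at large radius, beyond all the non-origin walls'' is therefore placed on the wrong side: (a) it is not known to clear the rank-nonzero walls, since those are only known to start at some positive radius and may run off to infinity; and (b) even if it did clear them, at large radius $\tau$ is small, so the objects parametrized there are in some other Bridgeland chamber and your claim that at $(s_\ast,t_\ast)$ the $\sigma_{s,t}$-semistable objects are exactly the $D_{s,t}$-Gieseker semistable sheaves has no justification. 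The same inversion infects your use of Lemma \ref{important_1_dim}: it says a Gieseker semistable sheaf that is $\sigma_{\tau_0}$-semistable stays semistable as $\tau$ \emph{increases}, i.e.\ as one moves \emph{inward} toward the origin, not outward.

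The paper's path is instead the chord $I$ from $(0,R)$ to $(R,0)$ (slightly extended) for $R$ smaller than the first wall on either axis, so that both endpoints lie in the respective Gieseker chambers. Since by Theorem \ref{main_1_dim} every wall is a line and destabilization persists along the whole line in the quadrant, a non-origin wall meeting the open chord would have to exit the triangle with vertices $(0,0)$, $(0,R)$, $(R,0)$ through one of the positive axes at a coordinate $\leq R$, contradicting the choice of $R$; hence the chord meets only walls through the origin, which are exactly the Gieseker walls. Note that this also eliminates the ``main obstacle'' you flag: no uniform angular bound on wall radii is needed, because boundedness is invoked only on the two axes and the rest is carried by the linearity of the walls. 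If you reposition your path as this small chord (rather than a large arc) and reverse the direction of the monotonicity statements, your outline does become essentially the paper's proof.
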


\begin{proof} By Proposition \ref{walls_1_dim}, the walls intersecting the rays $\{\sigma^{H'}_{0,t}\}_{t>0}$ and $\{\sigma^{H'}_{s,0}\}_{s>0}$ are bounded below by a positive number $R$ not on a wall. By Theorem \ref{main_1_dim} there exists $0<\epsilon\ll 1$ such that the only Bridgeland walls for the class $v$ intersecting the segment $I=\{(1-t)(0,R)+t(R,0)\colon t\in (-\epsilon,1+\epsilon)\}$ are those walls passing through $(0,0)$. These walls are in one-to-one correspondence with Gieseker walls due to Theorem \ref{main_1_dim}.
\end{proof}

\section{Birational geometry of complex surfaces}\label{birational}

In this section we present a new approach to a result of Toda \cite{TodaPS} within the set of ideas surrounding the previous sections. The precise statement is
\begin{theorem}{\cite[Corollary 1.4]{TodaPS}}
Let $X$ be a smooth projective complex surface and let $\pi \colon X\rightarrow Y$ be the blow down of a $-1$-curve $C\subset X$. Then there is a continuous one parameter family of Bridgeland stability conditions $\{\sigma_t\}_{t\in (-1,1)}$ on $D^b\mbox{Coh}(X)$  such that $M_{\sigma_t}([\mathcal{I}_p])$ is isomorphic to $X$ for $t>0$ and isomorphic to $Y$ for $t<0$.
\end{theorem}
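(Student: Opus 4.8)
The plan is to realize the blow-down as the crossing of a single Bridgeland wall for the class $v=ch(\mathcal{I}_p)=(1,0,-1)$, entirely inside the framework of the previous sections. First I would record that $M_H(v)\cong\mathrm{Hilb}^1(X)=X$: every torsion-free rank-one sheaf of this class is $\mathcal{I}_p$ for a unique $p\in X$, and all such sheaves are $H$-Gieseker stable. Following Section \ref{definitions} I take the orthogonal family $\alpha_t=(1,-\tfrac{K_X}{2}+L+tH,d_t)$ with $d_t$ forced by $\langle v,\alpha_t\rangle=0$; since $c_1(v)=0$ one finds $d_t=1$ for every $L$. To make the $\sigma_t$ genuine stability conditions near $t=0$, where the Bogomolov inequality for $\alpha_t$ could otherwise fail, I would first replace $v$ by its twist $v\cdot ch(\mathcal{O}_X(mH))$ for $m\gg 0$; this leaves the moduli spaces and the wall-and-chamber picture unchanged while forcing $d_t<0$, so Bogomolov holds automatically on a neighbourhood of $0$. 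By Theorem \ref{main} the family lies in the $H$-Gieseker chamber for $t\gg 0$, so $M_{\sigma_t}(v)\cong X$ there, and by Theorem \ref{maciocia} only finitely many walls meet the ray.

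The geometric input is the curve $C$. For $p\in C$ the surjections $\mathcal{O}_X\twoheadrightarrow\mathcal{O}_C\twoheadrightarrow\mathcal{O}_p$ and the snake lemma yield
$$0\to\mathcal{O}_X(-C)\to\mathcal{I}_p\to\mathcal{O}_C(-1)\to 0,$$
and using $C^2=-1$ with the adjunction identity $K_X\cdot C=-1$ one checks $ch(\mathcal{O}_X(-C))=(1,-C,-\tfrac12)$ and $ch(\mathcal{O}_C(-1))=(0,C,-\tfrac12)$, summing to $v$. A direct evaluation gives $\langle ch(\mathcal{O}_X(-C)),\alpha_t\rangle=p_a(C)-C\cdot L-t(C\cdot H)=-C\cdot L-t(C\cdot H)$, so choosing $L$ with $C\cdot L=0$ places the corresponding wall at $t=0$. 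On this wall $\mathcal{I}_p$ ($p\in C$) is strictly semistable, $S$-equivalent to $\mathcal{O}_X(-C)\oplus\mathcal{O}_C(-1)$; for $t>0$ the pairing is negative so $\mathcal{O}_X(-C)$ does not destabilise, while for $t<0$ it is positive and destabilises every $\mathcal{I}_p$ with $p\in C$. For a second rational curve $D$ the same pairing equals $-D\cdot L$ at $t=0$, so I would take $L$ generic in $C^{\perp}$, then restrict to a subinterval meeting no other wall and reparametrise it to $(-1,1)$; this is where the boundedness of Theorem \ref{maciocia} and the wall description of Corollary \ref{important2} are essential.

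To identify the two sides I would run the two relevant extension computations. A Grothendieck--Verdier calculation, using $N_{C/X}=\mathcal{O}_C(-1)$, gives $\mathrm{Ext}^1(\mathcal{O}_C(-1),\mathcal{O}_X(-C))\cong\C^2$ and $\mathrm{Ext}^1(\mathcal{O}_X(-C),\mathcal{O}_C(-1))\cong\C$, with both relevant $\Hom$-groups vanishing. Thus the $S$-equivalence class $\mathcal{O}_X(-C)\oplus\mathcal{O}_C(-1)$ is represented for $t>0$ by a $\P^1=\P(\mathrm{Ext}^1(\mathcal{O}_C(-1),\mathcal{O}_X(-C)))$ of stable objects, which are exactly the $\mathcal{I}_p$ with $p\in C$, and for $t<0$ by the single object $E'$ given by the unique nontrivial extension $0\to\mathcal{O}_C(-1)\to E'\to\mathcal{O}_X(-C)\to 0$. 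Since the $\mathcal{I}_p$ with $p\notin C$ are never touched, crossing $t=0$ collapses $C\subset X$ to a point and changes nothing else. By the Positivity Lemma the nef class $\mathcal{L}_{\sigma_0}$ on $M^{+}=X$ is zero exactly along $C$ and positive elsewhere, hence defines a morphism contracting the $-1$-curve $C$ to a point; by Castelnuovo's criterion this morphism is the blow-down $\pi$ and its target is $Y$. The same class on the $t<0$ side induces a bijective morphism $M_{\sigma_t}(v)\to Y$, an isomorphism over $Y\setminus\{y\}$ with the single reduced point $E'$ over $y$, which is therefore an isomorphism, giving $M_{\sigma_t}(v)\cong Y$ for $t<0$.

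I expect two steps to carry the real weight. The first is controlling the walls: for the ideal-sheaf class a rational curve $D$ produces a wall along which $\mathcal{I}_p$ ($p\in D$) becomes $S$-equivalent to $\mathcal{O}_X(-D)\oplus\mathcal{O}_D(-p)$, and since for $D\cong\P^1$ the quotient $\mathcal{O}_D(-p)\cong\mathcal{O}_D(-1)$ is independent of $p$, each rational curve is contracted at its wall, so one must use the generic $L\in C^{\perp}$ and the finiteness of Theorem \ref{maciocia} to ensure $C$ is the only curve contracted inside $(-1,1)$; for curves of positive genus the quotients $\mathcal{O}_D(-p)$ move with $p$, so those crossings are isomorphisms of moduli spaces and leave $M_{\sigma_t}(v)\cong X$ intact for $t>0$, and confirming this via Corollary \ref{important2} and the determinant-bundle ampleness of Remark \ref{ample_line_bundles} is the delicate part. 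The second is upgrading the numerics to the scheme-theoretic statement $M_{\sigma_t}(v)\cong Y$ for $t<0$: the computation $\mathrm{Ext}^1(\mathcal{O}_X(-C),\mathcal{O}_C(-1))\cong\C$ shows a single reduced point replaces $C$, and the uniqueness of the smooth contraction of a $-1$-curve then pins the target down to $Y$.
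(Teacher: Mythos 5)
Your overall strategy is the same as the paper's: a single Bridgeland wall at $t=0$ produced by $0\to\mathcal{O}_X(-C)\to\mathcal{I}_p\to\mathcal{O}_C(-1)\to 0$, with the degree-one part of $\alpha_t$ chosen orthogonal to $C$ at $t=0$, finiteness of walls on the ray to isolate this wall, and the computation $\mathrm{ext}^1(\mathcal{O}_X(-C),\mathcal{O}_C(-1))=h^1(\P^1,\mathcal{O}(-2))=1$ versus $\mathrm{ext}^1(\mathcal{O}_C(-1),\mathcal{O}_X(-C))=2$ to identify the two sides of the wall with $Y$ and $X$. Your treatment of the $t<0$ side (Positivity Lemma, bijectivity onto $Y$) is in fact more detailed than the paper's.

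There is, however, one step that fails as written: the claim that twisting $v$ by $ch(\mathcal{O}_X(mH))$ forces the Bogomolov inequality to hold near the wall. Twisting does make $d_t<0$ near $t=0$, but it also translates every wall along the ray: for a subsheaf $A\hookrightarrow E$ the pairing changes by $r(A)\,m\,(\mu_H(A)-\mu_H(E))$, so the wall $\mathcal{W}(\mathcal{O}_X(-C),\mathcal{I}_p)$ at $t=0$ becomes the wall $\mathcal{W}(\mathcal{O}_X(mH-C),\mathcal{I}_p(mH))$ at $t=-m$ (explicitly, the pairing is $-C\cdot H\,(m+t)$ when $C\cdot L=0$). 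At that point $d_{-m}$ is again large and positive, and the discriminant $\alpha_1^2-2\alpha_0\alpha_2$ evaluated at the wall is \emph{unchanged} by the twist: it equals $(L-\tfrac{K_X}{2})^2-2$ in both pictures. So the twist is a no-op for the issue it is meant to solve, and your family need not consist of stability conditions at the one point where you must cross a wall. The correct fix is the one the paper uses: take the degree-one twist large in the direction of $C^{\perp}$, e.g.\ $L=s_0\,\pi^*(A)$ for $A$ ample on $Y$ and $s_0\gg 0$ (the paper's parameter $s$), which makes $(L-\tfrac{K_X}{2})^2\gg 0$ while keeping $L\cdot C=0$, simultaneously guarantees that $\mathcal{O}_X(-C)$ lies in the heart along the ray, and (since all other walls are lines of negative slope in the $(s,t)$-quadrant) pushes every other wall off the ray $\{(s_1,t)\colon t\geq 0\}$ for $s_1$ large — which also replaces your appeal to genericity of $L$ in $C^{\perp}$. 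With that substitution your argument goes through and agrees with the paper's.
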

\begin{proof}
Choose a sufficiently ample line bundle $L$ on $Y$ such that $\pi^*(L)=D=H+C$ for some $H\in\mbox{Amp}(X)$. Consider the two-dimensional family of stability conditions $\sigma^{H}_{s,t}$ on $X$ given by the vectors
$\alpha_{s,t}=(1,-\frac{K_X}{2}+tH+sD,1)$. By choosing $s_0$ large enough we can assume
$$
-CH> \frac{K_XH}{2}-s_0DH, \ \ \text{and}\ \ \left(-\frac{K_X}{2}+tH+s_0D\right)^2>2 \ \ \ \text{for all}\ \ \ t\geq 0.
$$
Thus $0\rightarrow \mathcal{O}(-C)\rightarrow \mathcal{I}_p\rightarrow \mathcal{O}_C(-1)\rightarrow 0$ is a short exact sequence in $\mathcal{A}^H_{s,t}$ for every $s>s_0$ and $t\geq 0$. Moreover,
$$
\langle ch(\mathcal{O}(-C), \alpha_{s,0})\rangle=-sDC=0=\langle ch(\mathcal{I}_p),\alpha_{s,0}\rangle.
$$
Thus the ray $\sigma^H_{s,0}$ is a wall destabilizing all ideal sheaves $\mathcal{I}_p$ for $p\in C$. Now, if there were a horizontal wall in the quadrant $K=\{\sigma^H_{s,t}\colon s\geq s_0,\ t>0\}$ then it would have to be produced by a subsheaf of $\mathcal{I}_p$, such destabilizing object would have to be of the form $\mathcal{I}_Z(-C')$ for some zero-dimensional subscheme $Z\subset X$ containing $p$ and some curve $C'\subset X$, but this is not possible since being the wall horizontal will force $C'=C$ and $\chi(\mathcal{I}_Z(-C))-\chi(\mathcal{I}_p)=-\ell(Z)>0$.

 Therefore, by the proof of Theorem \ref{main} we know that there are only finitely many walls in the closure $\bar{K}$. Thus, we can choose $s_1>s_0$ such that there are no walls on the ray $\{\sigma^H_{s_1,t}\}_{t\geq 0}$ other than $\sigma^H_{s_0,0}$.
 
The moduli spaces $M_t([\mathcal{I}_p])$ for the stability conditions $\sigma^H_{s_1,t}$ with $t>0$ are all isomorphic to the moduli space of $s_1D$-twisted $H$-semistable sheaves of class $[\mathcal{I}_p]$, i.e., they are all isomorphic to $X$. Since
$$
\mbox{ext}^1(\mathcal{O}(-C),\mathcal{O}_C(-1))=h^1(\P^1,\mathcal{O}_{\P^1}(-2))=1,
$$ 
then $M_t([\mathcal{I}_p])$ for $-1\ll t\leq 0$ is naturally isomorphic to $Y$ and the map $M_{t}([\mathcal{I}_p])\rightarrow M_0([\mathcal{I}_p])$ coincides with the contraction $\pi$.
\end{proof}
\bibliographystyle{hep}
\bibliography{references.bib}

\end{document}